\documentclass[review]{elsarticle}
%\usepackage{lineno,hyperref}
%\modulolinenumbers[1]

\journal{some journal}

\usepackage{graphicx}
\usepackage{epstopdf}
%\usepackage{showkeys} % shows labels in the output file

% language support
\usepackage[utf8]{inputenc} % direct use of umlauts
% \usepackage{ngerman}        % German language support
% \usepackage[T1]{fontenc}    % font expansion for umlauts

% AMS-Packages
\usepackage{amsmath}
\usepackage{amstext}
\usepackage{amsfonts}
\usepackage{amsthm}
\usepackage{amssymb}
\usepackage[bf,SL,BF]{subfigure}
% graphics
%\usepackage{wrapfig}
%\usepackage{rotating}
%\usepackage{subfigure}
%\usepackage{psfrag}
%\usepackage{graphicx}
% tables
%\usepackage{booktabs}
\usepackage{caption}
%\usepackage{subcaption}
% source code
%\usepackage{listings}
%\lstset{
%  basicstyle=\footnotesize,
%  frame = single,
%  numberstyle = \tiny,
%  numbers = left,
%  numbersep =8pt,
%  xleftmargin=.25in,
%  xrightmargin=.25in
%  }

% generating hyperlinks in the output file

%\hypersetup{colorlinks}
%\hypersetup{linktocpage}

% For coloring comments in the text
\usepackage{color}

\usepackage{changes}

% changing format of equation and figure counter
%\renewcommand{\theequation}{\arabic{section}.\arabic{equation}}
%\renewcommand{\thefigure}{\arabic{section}.\arabic{figure}}

%\newcommand{\sect}%
%{
%  \setcounter{equation}{0}%
%  \setcounter{figure}{0}%
%  \section
%}

% shortcuts for frequently used commands
\newcommand{\R}{{\mathbb{R}}}

\newcommand{\E}{{\mathbb{E}}}
\newcommand{\N}{{\mathbb{N}}}

\newcommand{\F}{{\mathcal{F}}} % for the filtration
\renewcommand{\P}{{\mathbb{P}}} % for the probability measure

\newcommand{\diff}[1]{\,\mathrm{d}#1}

\newcommand{\triple}{{\vert\kern-0.25ex\vert\kern-0.25ex\vert}}

% theorem environments English/ German
\theoremstyle{plain}

\newtheorem{definition}{Definition}[section]
\newtheorem{theorem}[definition]{Theorem}
\newtheorem{lemma}[definition]{Lemma}

\newtheorem{assumption}[definition]{Assumption}

\theoremstyle{definition}
\newtheorem{remark}[definition]{Remark}

\newtheorem{example}[definition]{Example}

\newcommand{\eproof}{\hfill $\Box$}

\begin{document}

\begin{frontmatter}

\title{Truncated Euler-Maruyama method for classical and time-changed non-autonomous stochastic differential equations}

\author[SHNUaddress]{Wei Liu}

\author[STRATHaddress]{Xuerong Mao}

\author[SHNUaddress]{Jingwen Tang \corref{mycorrespondingauthor}}
\cortext[mycorrespondingauthor]{Corresponding author}
\ead{j.w.tang@foxmail.com}
\author[EDaddress,OXaddress]{Yue Wu}

\address[SHNUaddress]{Department of Mathematics, Shanghai Normal University, Shanghai, 200234, China}
\address[STRATHaddress]{Department of Mathematics and Statistics, University of Strathclyde, Glasgow, G1 1XH, UK}
\address[EDaddress]{School of Engineering, University of Edinburgh, Edinburgh, EH9 3JW, UK}
\address[OXaddress]{Mathematical Institute, University of Oxford, Oxford, OX2 6GG, UK}

\begin{abstract}
The truncated Euler-Maruyama (EM) method is proposed to approximate a class of non-autonomous stochastic differential equations (SDEs) with the H\"older continuity in the temporal variable and the super-linear growth in the state variable. The strong convergence with the convergence rate is proved. Moreover, the strong convergence of the truncated EM method for a class of highly non-linear time-changed SDEs is studied.
\end{abstract}

\begin{keyword}
Truncated Euler-Maruyama method, non-autonomous stochastic differential equations, strong convergence, super-linear coefficients, time-changed stochastic differential equations.
\MSC[2010] 65C30\sep 65L20\sep 60H10
\end{keyword}

\end{frontmatter}

%\linenumbers

\section{Introduction} \label{sec:intro}
Stochastic differential equations (SDEs) have broad applications in many areas such as finance, physics, chemistry and biology \cite{A2007,PB2010}. However, most SDEs do not have the explicit expressions of the true solutions. Therefore, the numerical methods and the rigorous numerical analyses of those methods become extremely important \cite{KP1992,MT2004}.
\par
In this paper, we investigate the numerical approximation to the solutions of a class of non-autonomous stochastic differential equations of the It\^o type
\begin{align*}
  \begin{split}
  \begin{cases}
    \diff{x(t)}= \mu(t, x(t)) \diff{t} + \sum_{r=1}^m \sigma^r\big(t,x(t)\big)
    \diff{W^r(t)}, \quad t \in [t_0,T],\\
    x(t_0)= x_0,
  \end{cases}
  \end{split}
\end{align*}
where the coefficients obey the H\"older continuity in the temporal variable and the super-linear growth condition in the state variable. The detailed mathematical descriptions can be found in Section \ref{mathpre}.
\par
For non-autonomous SDEs with the H\"older continuous temporal variable in the coefficients, the randomized techniques are used to construct the Euler type method \cite{PM2014} and the Milstein type method \cite{KW2018}. However, most papers that investigate non-autonomous SDEs only consider the global Lipschitz condition for the state variable. Thus, one aim of this paper is to study the non-autonomous SDEs whose coefficients may grow super-linearly in the state variable, known as highly non-linear SDEs.
\par
The classic Euler-Maruyama (EM) method has been proved divergent for highly non-linear SDEs \cite{HJK2011}. While bearing in mind the idea that explicit methods have their advantages in simple algorithm structure and relatively lower computational cost in the simulations of a large number of sample paths \cite{H2011}, the tamed Euler method \cite{HJK2012} and the truncated Euler-Maruyama method \cite{M2015} are developed to approximate the solutions of highly non-linear SDEs. Some other interesting works on explicit methods for highly non-linear SDEs are, for example, \cite{DKS2016,GLMY2017,HLM2018,HJ2015,LMY2018,S2013,WG2013,ZM2017,ZSL2018,ZWH2014} and the references therein. However, those explicit methods proposed to tackle the super-linearity in the state variable do not take the non-autonomous SDEs into consideration.
\par
When both the H\"older continuity in the temporal variable and the super-linearity in the state variable appear together in one SDE, few works have been done on the numerical approximation to its solution. To fill up this gap, we investigate the truncated Euler-Maruyama method for this type of SDEs in this paper.
\par
The time-changed SDEs, where the time variable $t$ is replaced by some stochastic process $E(t)$ (see Section \ref{sec:tcSDE} for the details), have attracted lots of attentions in recent years \cite{DS2017,MS2015,MS2004,NN2017,UHK2018,Wu2016,ZY2019}. Due to the change of the time, the solution to the time-changed SDE is understood as a subdiffusion process, which could be used to describe diffusion phenomena that move slower than the Brownian motion \cite{AKMS2017,MS2013}. Numerical approximations to such type of SDEs are also important, as the explicit forms of the true solutions are rarely obtained. Only recently, authors in \cite{JK2016} studied the classical EM method for a class of time-changed SDEs, both of whose drift and diffusion coefficients satisfy the global Lipschitz condition. To our best knowledge, \cite{JK2016} is the first paper to investigate the numerical approximation to time-changed SDEs by directly discretising the equations. More recently, the semi-implicit EM method was proposed in \cite{DL2019} to approximate some time-changed SDEs with the global Lipschitz condition on the drift coefficient being replaced by the one-sided Lipschitz condition. Both of those two works used the duality principle proposed in \cite{Kob2011}, which, briefly speaking, relates the time-changed SDEs to certain kind of SDEs (see Section \ref{sec:tcSDE} for more details). In \cite{JK2019}, the authors investigated the classical EM for a larger class of time-changed SDEs without the application of the duality principle, though the drift and diffusion coefficients still satisfy the global Lipschitz condition. All the three works \cite{DL2019,JK2019,JK2016} investigated either the $L^1$ or $L^2$ convergence.
\par
In this paper, the truncated EM is used to approximate a class of time-changed SDEs of the form
\begin{equation*}
dy(t) = \mu (E(t),y(t))dE(t) + \sigma (E(t),y(t))dW(E(t)).
\end{equation*}
To our best knowledge, this is the first work devoted to numerical approximations to time-changed SDEs,  whose drift and diffusion coefficients are allowed to grow super-linearly. Moreover, we consider the $L^{\overline{q}}$ convergence for any $\overline{q} \geq 2.$
\par
The main contributions of our work are as follows.
\begin{itemize}
\item The truncated Euler-Maruyama method, which is an explicit method, is proved to be convergent to SDEs with the H\"older continuity in the temporal variable and the super-linearity in the state variable.
\item The convergence rate of $\min\{\alpha,\gamma,\frac12-\varepsilon\}$ is given, where $\alpha$ and $\gamma$ are the H\"older continuity indexes in the drift and diffusion coefficients, and $\varepsilon>0$ could be arbitrarily small.
\item The strong convergence of the truncated EM method for a class of time-changed SDEs, whose coefficients can grow super-linearly, is proved.
\end{itemize}
The paper is constructed as follows. Section \ref{mathpre} briefly introduces the truncated Euler-Maruyama method and some useful lemmas. The strong convergence with the rate for classical SDEs is presented and proved in Section \ref{mainresult}. The truncated EM method for time-changed SDEs is discussed in \ref{sec:tcSDE}. Numerical examples are given in Section \ref{numsimu} to demonstrate the theoretical results.

\section{Mathematical preliminaries}\label{mathpre}

This section is divided into three parts. In Section \ref{notationandassumption}, the notations and assumptions are introduced. To keep the paper self-contained, the truncated EM method is briefed in Section \ref{tEMemthod}. Some useful lemmas are presented in Section \ref{usefullemmas}.

\subsection{Notations and assumptions}\label{notationandassumption}

Throughout this paper, unless otherwise specified, we let $(\Omega_W,\F^W,\P_W)$ be a complete probability space with a filtration $\{\F^W_t\}_{t\in [0,T]}$ satisfying the usual conditions (that is, it is right continuous and increasing while $\F^W_0$ contains all $\P_W$-null sets), and let $\E_W$ denote the probability expectation with respect to $\P$. If $x\in \R^d$, then $|x|$ is the Euclidean norm. Let $x^T$ denotes the transposition of $x$. Moreover, for two real numbers $a$ and $b$, we use $a\vee b=\max(a,b)$ and $a\wedge b=\min(a,b)$.\par
For $d,m \in \N$, let $W:[t_0,T]\times\Omega_W\rightarrow \R^m$ be a standard $\{\F^W_t\}_{t\in [t_0,T]}$-Wiener process. Moreover, let $x:[t_0,T]\times\Omega_W\rightarrow \R^d$ be an $\{\F^W_t\}_{t\in [t_0,T]}$-adapted stochastic process that is a solution to It\^o type stochastic differential equation
\par \noindent
\begin{align}
  \label{eq:SDE}
  \begin{split}
  \begin{cases}
    \diff{x(t)} = \mu(t, x(t)) \diff{t} + \sum_{r=1}^m \sigma^r\big(t,x(t)\big)
    \diff{W^r(t)}, \quad t \in [t_0,T],\\
    x(t_0) = x_0,
  \end{cases}
  \end{split}
\end{align}
where $\E_W |x_0|^p < \infty$ for any $p > 0$, the drift coefficient function $\mu:[t_0,T]\times\R^d\rightarrow \R^d$ and the diffusion coefficient function $\sigma^r:[t_0,T]\times\R^d \rightarrow \R^d$ for $r\in \{1,2,\cdots,m\}$.\par
We impose the following assumptions on the drift and diffusion coefficients.

\begin{assumption}
 \label{as:Polynomial growth}
Assume that there exist positive constants $\beta$ and $M$ such that
\begin{equation*}
|\mu(t,x)-\mu(t,y)|\vee |\sigma^r(t,x)-\sigma^r(t,y)|\leqslant M(1+|x|^{\beta}+ |y| ^{\beta}) |x-y|,
\end{equation*}
for all $t\in [t_0,T]$, any $x,y\in \R^d$ and any $r\in\{1,2,\cdots,m\}$.
\end{assumption}

It can be observed from Assumption \ref{as:Polynomial growth} that all $t\in [t_0,T]$, $r\in\{1,2,\cdots,m\}$ and $x \in \R^d$
\par \noindent
\begin{align}
\label{mu(u)}
|\mu(t,x)|\vee |\sigma^r(t,x)|\leq K |x|^{\beta+1},
\end{align}
where $$K=2M+\sup_{t_0\leq t\leq T}(|\mu(t,0)|+\max_{1\leq r\leq m}|\sigma^r(t,0)|).$$

\begin{assumption}
\label{as:Khasminskii 2}
Assume that there exists a pair of constants $q>2$ and $L_1>0$ such that
\begin{align*}
(x-y)^T(\mu(t,x)-\mu(t,y))+\frac{q-1}{2}\sum_{r=1}^m|\sigma^r(t,x)-\sigma^r(t,y)|^2\leq L_1|x-y|^2,
\end{align*}
for all $t\in [t_0,T]$ and any $x,y\in \R^d$.
\end{assumption}

\begin{assumption}
\label{as:Khasminskii}
Assume that there exists a pair of constants $p>2$ and $L_2>0$ such that
\begin{align}
\label{k2}
    x^T \mu(t, x)+\frac{p-1}{2}
    \sum_{r=1}^m|\sigma^r(t, x) |^2 &\le L_2(1+|x|^2),
  \end{align}
for all $t\in [t_0,T]$ and any $x \in \R^d$.
\end{assumption}
\begin{remark}
It is clear that Assumption \ref{as:Khasminskii} may be derived from Assumption \ref{as:Khasminskii 2} but with more complicated coefficient in front of $|\sigma^r(t, x) |^2$. To keep the notation simple, we state Assumption \ref{as:Khasminskii} as a new assumption.
\end{remark}

\begin{assumption}
\label{as:superlinear growth}
Assume that there exist constants $\gamma\in (0,1]$, $\alpha\in(0,1]$, $K_1 > 0$ and $K_2 > 0$ such that
\begin{eqnarray*}
&&|\mu(t_1,x)-\mu(t_2,x)|\leq K_1(1+|x|^{\beta+1})|t_1-t_2|^\gamma,  \\
&&|\sigma^r(t_1,x)-\sigma^r(t_2,x))|\leq K_2(1+|x|^{\beta+1})|t_1-t_2|^\alpha,
\end{eqnarray*}
for all $t\in [t_0,T]$, any $x\in \R^d$ and any $r\in\{1,2,\cdots,m\}$, where the $\beta$ is the same as that in Assumption \ref{as:Polynomial growth}.
\end{assumption}

\subsection{The truncated Euler-Maruyama method for non-autonomous SDEs}\label{tEMemthod}
This part is to recall the truncated EM numerical scheme. To define the truncated EM numerical solutions with time $t$, we choose a strictly increasing continuous function $f:\R_+\to\R_+$ such that $f(u)\to \infty$ as $u\to \infty$ and
\par \noindent
\begin{equation*}
\sup_{t_0\leq t\leq T}\sup_{|x|\leq u}(|\mu(t,x)| \vee |\sigma(t,x)|)\leq f(u),\quad\forall u\geq1.
\end{equation*}
Denote by $f^{-1}$ the inverse function of $f$. It is clear that $f^{-1}$ is a strictly increasing continuous function from $[f(0),\infty)$ to $\R_+$. We also choose a constant $\hat{h}\geq 1\wedge |f(1)|$ and a strictly decreasing function $\kappa:(0,1]\to [|f(1)|,\infty)$ such that $$\lim_{\Delta\to0}\kappa(\Delta)=\infty,\qquad  \Delta^\frac14 \kappa(\Delta)\leq\hat{h},\quad\forall\Delta\in(0,1].$$\par
For a given step size $\Delta\in(0,1]$ let us define the truncated mapping $\pi_{\Delta}:\R^d\to\{x\in\R^d:|x|\leq f^{-1}(\kappa(\Delta))\}$ by
$$\pi_\Delta(x)=\bigg(|x|\wedge f^{-1}(\kappa(\Delta))\bigg)\frac{x}{|x|},$$
where we set $x/|x|=0$ when $x=0$.\par
Define the truncated functions by
\begin{align}
\label{definition}
\mu_\Delta(t,x)=\mu(t,\pi_\Delta(x)),\qquad \sigma_\Delta(t,x)=\sigma(t,\pi_\Delta(x)),
\end{align}
for $x\in\R^d$. It is easy to see that for any $t\in[t_0,T]$ and all $x\in \R^d$
\begin{align*}
|\mu_\Delta(t,x)|\vee|\sigma_\Delta(t,x)|\leq f(f^{-1}(\kappa(\Delta)))=\kappa(\Delta).
\end{align*}
The discrete-time truncated EM numerical solutions $x_\Delta(t_k)$, to approximate $x(t_k)$ for $t_k=k\Delta+t_0$, are formed by setting $x_\Delta(t_0)=x_0$ and computing
\begin{align*}
x_\Delta(t_{k+1})=x_\Delta(t_k)+\mu_\Delta(t_k,x_\Delta (t_k))\Delta+\sum_{r=1}^{m}\sigma_\Delta^r(t_k,x_\Delta(t_k))\Delta W_k^r,
\end{align*}
for $k=0,1,\cdots,N_\Delta$, where $N_\Delta$ is the integer part of $T/\Delta$ and we will set $t_{N_\Delta+1}=T$ while $\Delta W_k^r=W^r(t_{k+1})-W^r(t_k)$.
\par
To form the continuous versions of truncated EM numerical schemes, we define
\begin{align*}
\tau (t)=\sum_{k=0}^{N_\Delta} t_{k} I_{[t_{k},t_{k+1})}(t), ~~t\in[t_0,T].
\end{align*}
There are two versions of the continuous-time truncated EM solutions. The first one is defined by
\begin{align*}
\overline{x}_\Delta(t)=\sum_{k=0}^{N_\Delta} x_\Delta(t_k) I_{[t_{k},t_{k+1})}(t),
\end{align*}
which is a simple step process. The other one is defined by
\begin{align*}
x_\Delta(t)=x_0+\int_{t_0}^t \mu_\Delta(\tau(s),\overline{x}_\Delta(s))ds+\sum_{r=1}^m\int_{t_0}^t \sigma^r_\Delta(\tau(s),\overline{x}_\Delta(s))\diff W^r(s),
\end{align*}
which is continuous in $t \in [t_0,T]$.

\subsection{Some useful lemmas}\label{usefullemmas}
In this subsection, some lemmas that will be essential for the proof of the main result in Section \ref{mainresult} are presented. The proofs of these lemmas are either straightforward or can be found in references. Therefore, to focus our attention on the proof of the main result, those lemmas are stated without proofs.
\begin{lemma}
\label{lemma2}
Let Assumptions \ref{as:Polynomial growth} and \ref{as:Khasminskii} hold. The SDE (\ref{eq:SDE}) has a unique global solution $x(t)$. Moreover,
$$\sup_{t_0\leq t\leq T}\E_W|x(t)|^p<\infty.$$
\end{lemma}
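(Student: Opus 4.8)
The plan is to follow the classical Khasminskii-type argument for existence, uniqueness, nonexplosion and moment bounds. First I would establish a unique maximal local solution. Assumption~\ref{as:Polynomial growth} implies that on each ball $\{|x|\vee|y|\leq R\}$ the coefficients are Lipschitz in the state variable with constant $M(1+2R^\beta)$, uniformly in $t\in[t_0,T]$; together with the growth bound \eqref{mu(u)}, this local Lipschitz property is exactly what the standard theory requires to furnish a unique maximal local solution $x(t)$ on $[t_0,\tau_e)$, where $\tau_e$ is the explosion time. Concretely, I would introduce the stopping times $\tau_R=\inf\{t\in[t_0,T]:|x(t)|\geq R\}$ (with $\inf\emptyset=\infty$) and set $\tau_e=\lim_{R\to\infty}\tau_R$, so the remaining task reduces to proving $\tau_e>T$ almost surely together with a uniform $p$-th moment bound.

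The heart of the argument is an It\^o estimate tuned to the exponent in Assumption~\ref{as:Khasminskii}. Let $V(x)=(1+|x|^2)^{p/2}$ and let $\mathcal{L}$ denote the associated diffusion generator. A direct computation of the gradient and Hessian of $V$ gives that the drift of $V(x(t))$ splits as $p(1+|x|^2)^{p/2-1}[x^T\mu+\frac12\sum_r|\sigma^r|^2]$ plus the cross term $\frac{p(p-2)}{2}(1+|x|^2)^{p/2-2}\sum_r(x^T\sigma^r)^2$. Since $p>2$ and $(x^T\sigma^r)^2\leq|x|^2|\sigma^r|^2\leq(1+|x|^2)|\sigma^r|^2$, the cross term is dominated by $\frac{p(p-2)}{2}(1+|x|^2)^{p/2-1}\sum_r|\sigma^r|^2$, so that
\begin{align*}
\mathcal{L}V(x)\leq p(1+|x|^2)^{p/2-1}\Big[x^T\mu(t,x)+\frac{p-1}{2}\sum_{r=1}^m|\sigma^r(t,x)|^2\Big].
\end{align*}
Assumption~\ref{as:Khasminskii} then bounds the bracket by $L_2(1+|x|^2)$, yielding the clean differential inequality $\mathcal{L}V(x)\leq pL_2\,V(x)$. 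This is precisely why the weight $\frac{p-1}{2}$ appears in front of $|\sigma^r|^2$ in \eqref{k2}.

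Next I would exploit this inequality along the stopped process. Applying It\^o's formula to $V(x(t\wedge\tau_R))$, the stochastic integral is a genuine martingale because its integrand is bounded on $[t_0,\tau_R]$; taking expectations therefore gives
\begin{align*}
\E_W V(x(t\wedge\tau_R))\leq \E_W V(x_0)+pL_2\int_{t_0}^t\E_W V(x(s\wedge\tau_R))\diff s.
\end{align*}
Since $\E_W|x_0|^p<\infty$ ensures $\E_W V(x_0)<\infty$, Gronwall's inequality produces $\E_W V(x(t\wedge\tau_R))\leq \E_W V(x_0)\,\ee^{pL_2(T-t_0)}=:C$, a bound independent of $R$ and of $t\in[t_0,T]$. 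Nonexplosion then follows from a Chebyshev-type estimate: since $|x(\tau_R)|=R$ on $\{\tau_R\leq T\}$ by continuity, $R^p\,\P_W(\tau_R\leq T)\leq \E_W V(x(T\wedge\tau_R))\leq C$, whence $\P_W(\tau_R\leq T)\leq C/R^p\to0$ and so $\tau_e>T$ a.s., i.e. the solution is global. Finally, letting $R\to\infty$ and invoking Fatou's lemma converts the stopped bound into $\sup_{t_0\leq t\leq T}\E_W|x(t)|^p\leq C$, as claimed.

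I expect the delicate points to be bookkeeping rather than conceptual: one must confirm that Assumption~\ref{as:Polynomial growth} genuinely supplies the local Lipschitz property needed for uniqueness (it does, once restricted to bounded sets) and that the local martingale truly reduces to a martingale under the stopping at $\tau_R$. The only place where the argument is genuinely tailored to the hypotheses is the algebra of the second paragraph, where aligning the cross term $(x^T\sigma^r)^2$ with the diffusion weight $\frac{p-1}{2}$ is essentially forced by the structure of \eqref{k2}.
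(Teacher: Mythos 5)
Your proposal is correct and follows essentially the same route as the paper, which does not spell out a proof but defers to \cite{M2007}, where precisely this Khasminskii-type argument (local Lipschitz continuity on balls giving a unique maximal local solution, the Lyapunov function $(1+|x|^2)^{p/2}$ with the cross term absorbed to produce the weight $\frac{p-1}{2}$, stopping at $\tau_R$, Gronwall, a Chebyshev bound for nonexplosion, and Fatou for the moment bound) is carried out. Your bookkeeping of the Hessian term and the reduction of the stopped stochastic integral to a true martingale are both sound, so there is nothing to correct.
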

The proof of the above lemma can be found in , for example, \cite{M2007}.

\begin{lemma}
\label{lemma1}
For any $\Delta\in (0,1]$ and any $\overline{p}>0$, we have
\begin{align*}
\E_W|x_\Delta(t)-\overline{x}_\Delta(t)|^{\overline{p}}\leq C_{\overline{p}}\Delta^{\frac{\overline{p}}{2}}(\kappa(\Delta))^{\overline{p}}, \quad\forall t\in[t_0,T],
\end{align*}
where $C_{\overline{p}}$ is a positive constant dependent only on $\overline{p}$. Consequently
\begin{align*}
\lim_{\Delta\to 0}\E_W|x_\Delta(t)-\overline{x}_\Delta(t)|^{\overline{p}}=0,\quad \forall t\in[t_0,T].
\end{align*}
\end{lemma}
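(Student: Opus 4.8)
The plan is to exploit the fact that $x_\Delta(t)-\overline{x}_\Delta(t)$ is exactly the one-step increment of the continuous interpolation over the final subinterval. Fixing $t\in[t_0,T]$, let $t_k=\tau(t)$ be the grid point with $t\in[t_k,t_{k+1})$, so that $\overline{x}_\Delta(t)=x_\Delta(t_k)$ and $t-\tau(t)\le\Delta$. From the integral definition of $x_\Delta$ I would write
\begin{align*}
x_\Delta(t)-\overline{x}_\Delta(t)=\int_{\tau(t)}^{t}\mu_\Delta(\tau(s),\overline{x}_\Delta(s))\diff{s}+\sum_{r=1}^m\int_{\tau(t)}^{t}\sigma_\Delta^r(\tau(s),\overline{x}_\Delta(s))\diff{W^r(s)}.
\end{align*}
The crucial ingredient is the uniform bound already recorded in Section \ref{tEMemthod}, namely $|\mu_\Delta(t,x)|\vee|\sigma_\Delta(t,x)|\le\kappa(\Delta)$ for all $t$ and $x$, which makes both integrands deterministically bounded by $\kappa(\Delta)$ irrespective of how large $\overline{x}_\Delta$ becomes. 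This is precisely the point of the truncation and is what converts a superlinear-growth difficulty into an elementary estimate.

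I would first treat the case $\overline{p}\ge2$. Applying the elementary inequality $|a+b|^{\overline{p}}\le 2^{\overline{p}-1}(|a|^{\overline{p}}+|b|^{\overline{p}})$ separates the drift and diffusion contributions. For the drift term, the pointwise bound gives $\big|\int_{\tau(t)}^t\mu_\Delta\,\diff{s}\big|\le\kappa(\Delta)(t-\tau(t))\le\kappa(\Delta)\Delta$, hence a contribution of order $\Delta^{\overline{p}}(\kappa(\Delta))^{\overline{p}}$. For the stochastic term, the Burkholder--Davis--Gundy inequality yields
\begin{align*}
\E_W\Big|\int_{\tau(t)}^t\sigma_\Delta^r\,\diff{W^r(s)}\Big|^{\overline{p}}\le C_{\overline{p}}\,\E_W\Big(\int_{\tau(t)}^t|\sigma_\Delta^r|^2\diff{s}\Big)^{\overline{p}/2}\le C_{\overline{p}}\big(\kappa(\Delta)^2\Delta\big)^{\overline{p}/2},
\end{align*}
which is of order $\Delta^{\overline{p}/2}(\kappa(\Delta))^{\overline{p}}$. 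Since $\Delta\le1$ forces $\Delta^{\overline{p}}\le\Delta^{\overline{p}/2}$, the diffusion term dominates and the two estimates combine to the claimed bound $\E_W|x_\Delta(t)-\overline{x}_\Delta(t)|^{\overline{p}}\le C_{\overline{p}}\Delta^{\overline{p}/2}(\kappa(\Delta))^{\overline{p}}$.

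For $0<\overline{p}<2$ I would not repeat the computation but instead reduce to the case $\overline{p}=2$ via Jensen's inequality, $\E_W|\cdot|^{\overline{p}}\le(\E_W|\cdot|^2)^{\overline{p}/2}$, and feed in the $\overline{p}=2$ bound, which produces $(C_2\Delta(\kappa(\Delta))^2)^{\overline{p}/2}=C_{\overline{p}}\Delta^{\overline{p}/2}(\kappa(\Delta))^{\overline{p}}$ as required. Finally, the stated limit follows from the structural constraint $\Delta^{1/4}\kappa(\Delta)\le\hat{h}$ imposed on $\kappa$: this gives $\kappa(\Delta)\le\hat{h}\Delta^{-1/4}$, so $\Delta^{\overline{p}/2}(\kappa(\Delta))^{\overline{p}}\le\hat{h}^{\overline{p}}\Delta^{\overline{p}/4}\to0$ as $\Delta\to0$. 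The proof is essentially routine; there is no serious obstacle, and the only points demanding care are keeping the constants independent of $\Delta$ (they depend only on $\overline{p}$) and correctly handling the sub-$L^2$ range by interpolation rather than by direct estimation.
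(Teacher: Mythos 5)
Your proof is correct and follows essentially the same route as the paper, which simply defers to Lemma 3.1 of \cite{M2015}: the one-step increment representation over $[\tau(t),t]$, the uniform bound $|\mu_\Delta|\vee|\sigma_\Delta|\le\kappa(\Delta)$ from the truncation, the elementary inequality plus Burkholder--Davis--Gundy for $\overline{p}\ge 2$, and Jensen's inequality to cover $0<\overline{p}<2$. The concluding limit via $\Delta^{1/4}\kappa(\Delta)\le\hat{h}$ is also exactly the intended argument.
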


\begin{lemma}
\label{lemma3}
Let Assumptions \ref{as:Polynomial growth} and \ref{as:Khasminskii} hold. Then
\begin{align*}
\sup_{0<\Delta\leq1}\sup_{t_0\leq t\leq T}\E_W|x_\Delta(t)|^p\leq C,
\end{align*}
where $C$ is a positive constant independent of $\Delta$.
\end{lemma}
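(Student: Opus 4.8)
The plan is to apply It\^o's formula to the continuous-time process $x_\Delta(t)$ with the Lyapunov function $V(x)=|x|^p$ and then set up a Gronwall inequality whose constants are independent of $\Delta$. For a fixed $\Delta$ the truncated coefficients are bounded by $\kappa(\Delta)$, so $x_\Delta(t)$ has finite moments of every order and the stochastic integral in It\^o's formula is a genuine martingale; hence expectations may be taken freely. Using $\nabla V(x)=p|x|^{p-2}x$ and $\nabla^2 V(x)=p|x|^{p-2}I+p(p-2)|x|^{p-4}xx^T$, together with $(x^T\sigma^r)^2\le|x|^2|\sigma^r|^2$ and $p>2$, the generator term is bounded by
\begin{equation*}
p|x_\Delta(s)|^{p-2}\Big[x_\Delta(s)^T\mu_\Delta(\tau(s),\overline{x}_\Delta(s))+\frac{p-1}{2}\sum_{r=1}^m|\sigma^r_\Delta(\tau(s),\overline{x}_\Delta(s))|^2\Big].
\end{equation*}

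The first preliminary step is to upgrade Assumption \ref{as:Khasminskii} to a truncated version that is uniform in $\Delta$: I would show there is a constant $\bar L$, independent of $\Delta$, with
\begin{equation*}
x^T\mu_\Delta(t,x)+\frac{p-1}{2}\sum_{r=1}^m|\sigma^r_\Delta(t,x)|^2\le \bar L(1+|x|^2),\qquad\forall x\in\R^d,\ t\in[t_0,T].
\end{equation*}
For $|x|\le f^{-1}(\kappa(\Delta))$ this is exactly Assumption \ref{as:Khasminskii} with $\bar L=L_2$, since $\pi_\Delta(x)=x$. For $|x|> f^{-1}(\kappa(\Delta))$ one has $x=\lambda\,\pi_\Delta(x)$ with $\lambda=|x|/f^{-1}(\kappa(\Delta))\ge 1$, and applying Assumption \ref{as:Khasminskii} to $\pi_\Delta(x)$ (whose norm is at most $|x|$), then treating the two signs of $\pi_\Delta(x)^T\mu(t,\pi_\Delta(x))$ separately, yields the claim with an enlarged constant. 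This is where the geometry of the truncation enters, and keeping $\bar L$ independent of $\Delta$ is the main point to get right.

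Next I would split $x_\Delta(s)=\overline{x}_\Delta(s)+(x_\Delta(s)-\overline{x}_\Delta(s))$ in the drift pairing so the Khasminskii structure applies at the frozen argument $\overline{x}_\Delta(s)$:
\begin{equation*}
x_\Delta(s)^T\mu_\Delta=\overline{x}_\Delta(s)^T\mu_\Delta+(x_\Delta(s)-\overline{x}_\Delta(s))^T\mu_\Delta.
\end{equation*}
The truncated inequality controls $\overline{x}_\Delta(s)^T\mu_\Delta+\frac{p-1}{2}\sum_r|\sigma^r_\Delta|^2$ by $\bar L(1+|\overline{x}_\Delta(s)|^2)\le \bar L(1+2|x_\Delta(s)|^2+2|x_\Delta(s)-\overline{x}_\Delta(s)|^2)$, while the leftover cross term is bounded via $|\mu_\Delta|\le\kappa(\Delta)$ by $|x_\Delta(s)-\overline{x}_\Delta(s)|\,\kappa(\Delta)$. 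After multiplying by $|x_\Delta(s)|^{p-2}$, Young's inequality separates every mixed term into a multiple of $|x_\Delta(s)|^p$ plus pure powers of $|x_\Delta(s)-\overline{x}_\Delta(s)|$ and $\kappa(\Delta)$, e.g. $|x_\Delta|^{p-2}|x_\Delta-\overline{x}_\Delta|^2\le\frac{p-2}{p}|x_\Delta|^p+\frac{2}{p}|x_\Delta-\overline{x}_\Delta|^p$ and $|x_\Delta|^{p-2}|x_\Delta-\overline{x}_\Delta|\kappa(\Delta)\le\frac{p-2}{p}|x_\Delta|^p+\frac{2}{p}(|x_\Delta-\overline{x}_\Delta|\kappa(\Delta))^{p/2}$.

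Finally I would take expectations and invoke Lemma \ref{lemma1}, $\E_W|x_\Delta(s)-\overline{x}_\Delta(s)|^{\overline p}\le C_{\overline p}\Delta^{\overline p/2}\kappa(\Delta)^{\overline p}$. Combined with the standing constraint $\Delta^{1/4}\kappa(\Delta)\le\hat h$, one checks $\Delta^{p/2}\kappa(\Delta)^p\le\hat h^p\Delta^{p/4}\le\hat h^p$ and $\Delta^{p/4}\kappa(\Delta)^p=(\Delta^{1/4}\kappa(\Delta))^p\le\hat h^p$, so every remainder is bounded by a constant independent of $\Delta$. This leaves $\E_W|x_\Delta(t)|^p\le \E_W|x_0|^p+C_1(T-t_0)+C_2\int_{t_0}^t\E_W|x_\Delta(s)|^p\,ds$ with $C_1,C_2$ not depending on $\Delta$; since $\E_W|x_\Delta(s)|^p$ is finite for each fixed $\Delta$, Gronwall's inequality gives the uniform bound $C=(\E_W|x_0|^p+C_1(T-t_0))e^{C_2(T-t_0)}$. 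The main obstacle throughout is bookkeeping the mismatch between the state $x_\Delta(s)$ carried by the It\^o correction $|x_\Delta(s)|^{p-2}$ and the frozen argument $\overline{x}_\Delta(s)$ in the coefficients, and ensuring each resulting remainder is killed uniformly in $\Delta$ by the interplay of Lemma \ref{lemma1} and $\Delta^{1/4}\kappa(\Delta)\le\hat h$.
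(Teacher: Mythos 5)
Your proposal is correct and follows essentially the same route as the paper, which defers this lemma to the proof of Lemma 3.2 in Mao (2015) (reference \cite{M2015}) with the time argument inserted: a $\Delta$-uniform Khasminskii-type bound for the truncated coefficients, It\^o's formula for $|x_\Delta(t)|^p$ with the splitting $x_\Delta(s)=\overline{x}_\Delta(s)+(x_\Delta(s)-\overline{x}_\Delta(s))$, Young's inequality combined with Lemma \ref{lemma1} and $\Delta^{1/4}\kappa(\Delta)\leq\hat{h}$, and finally Gronwall. The only detail you leave implicit is that the case analysis in your truncated Khasminskii bound also needs $f^{-1}(\kappa(\Delta))\geq 1$ (so that $\lambda=|x|/f^{-1}(\kappa(\Delta))\leq|x|$), which is guaranteed by the paper's requirement that $\kappa$ maps into $[|f(1)|,\infty)$.
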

From now on, the constants $C$, $C_1$, $C_2$, $C_3$, $C_{31}$ and $C_{32}$ stand for generic positive constants that are independent of $\Delta$ and their values may change between occurrences.

The proofs of Lemmas \ref{lemma1} and \ref{lemma3} follow straightforwardly from the proofs of Lemmas 3.1 and 3.2 in \cite{M2015}, by substituting $\mu_\Delta(t,\overline{x}_\Delta(s))$ and $\sigma_\Delta(t,\overline{x}_\Delta(s))$ for $\mu_\Delta(\overline{x}_\Delta(s))$ and $\sigma_\Delta(\overline{x}_\Delta(s))$, respectively.
\begin{remark}
From Lemma \ref{lemma3}, it is easily obtained that
$$\sup_{0<\Delta<1}\sup_{t_0\leq t\leq T}\E_W|\overline{x}_\Delta(t)|^p\leq C.$$
\end{remark}

\section{Main results on classical SDEs}\label{mainresult}

In this section, the strong convergence of the truncated EM method is proved and the convergence rate is given. The main theorem of this paper is as follows.

\begin{theorem}
\label{main theorem 2}
Let Assumptions \ref{as:Polynomial growth}, \ref{as:Khasminskii 2} and \ref{as:superlinear growth} hold. In addition, assume that (\ref{k2}) in Assumption \ref{as:Khasminskii} is true for any $p > 2$. Then for any $\overline{q} \geq 2$, $\Delta \in (0,1]$ and any $\varepsilon \in (0,1/4)$,
\begin{align*}
\sup_{t_0\leq t\leq T}\E_W|x(t)-x_\Delta(t)|^{\overline{q}}\leq C\Delta^{\min{(\gamma,\alpha,\frac12-\varepsilon)}{\overline{q}}},
\end{align*}
and
\begin{align*}
\sup_{t_0\leq t\leq T}\E_W|x(t)-\overline{x}_\Delta(t)|^{\overline{q}}\leq C\Delta^{\min{(\gamma,\alpha,\frac12-\varepsilon)}{\overline{q}}}.
\end{align*}
\end{theorem}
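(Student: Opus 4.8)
The plan is to compare the true solution $x$ with the continuous truncated solution $x_\Delta$ by a stopping-time localisation that switches off the truncation and lets the super-linear growth be absorbed by Assumption~\ref{as:Khasminskii 2}. Fix $\overline{q}\ge 2$, write $e(t)=x(t)-x_\Delta(t)$, and for a level $R\ge 1$ set $\tau_R=\inf\{t\ge t_0:|x(t)|\ge R\}$, $\rho_{\Delta,R}=\inf\{t\ge t_0:|x_\Delta(t)|\ge R\}$ and $\theta=\tau_R\wedge\rho_{\Delta,R}$. I would split $\E_W|e(t)|^{\overline{q}}$ according to whether $\theta>T$ or $\theta\le T$, treating the second (rare) event by moment bounds and the first by an It\^o/Gronwall estimate. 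On $\{\theta\le T\}$ I would apply H\"older's inequality with an exponent $\overline{p}>\overline{q}$, giving $\E_W[|e(t)|^{\overline{q}}\one_{\{\theta\le T\}}]\le(\E_W|e(t)|^{\overline{p}})^{\overline{q}/\overline{p}}(\P_W(\theta\le T))^{1-\overline{q}/\overline{p}}$. The moment bounds of Lemmas~\ref{lemma2} and~\ref{lemma3}, available at every order because (\ref{k2}) is assumed for all $p>2$, bound the first factor uniformly in $\Delta$, while Chebyshev's inequality and the same moment bounds give $\P_W(\theta\le T)\le CR^{-p}$ for every $p$. Hence this contribution is at most $CR^{-p(1-\overline{q}/\overline{p})}$, which I will later push below $\Delta^{(1/2-\varepsilon)\overline{q}}$ by sending $R=R(\Delta)\to\infty$ and taking $p$ large.

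On $\{\theta>T\}$ the path $x_\Delta$ stays in the ball of radius $R$, so choosing $R\le f^{-1}(\kappa(\Delta))$ keeps the truncation inactive, i.e.\ $\mu_\Delta(\tau(s),\overline{x}_\Delta(s))=\mu(\tau(s),\overline{x}_\Delta(s))$ there, and similarly for each $\sigma^r$. I would then apply It\^o's formula to $|e(t\wedge\theta)|^{\overline{q}}$ and split each coefficient difference as
\begin{align*}
\mu(s,x(s))-\mu(\tau(s),\overline{x}_\Delta(s))
&= [\mu(s,x(s))-\mu(\tau(s),x(s))] \\
&\quad + [\mu(\tau(s),x(s))-\mu(\tau(s),x_\Delta(s))] \\
&\quad + [\mu(\tau(s),x_\Delta(s))-\mu(\tau(s),\overline{x}_\Delta(s))],
\end{align*}
and analogously for every $\sigma^r$. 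The middle pieces, which share the time $\tau(s)$ and involve the pair $(x(s),x_\Delta(s))$, combine into exactly the quantity controlled by Assumption~\ref{as:Khasminskii 2}, yielding a bound $L_1|e(s)|^2$ with a constant \emph{independent of $R$}; the first pieces are estimated by the temporal H\"older Assumption~\ref{as:superlinear growth}, contributing $|s-\tau(s)|^{\gamma}\le\Delta^{\gamma}$ and $|s-\tau(s)|^{\alpha}\le\Delta^{\alpha}$ times $(1+|x(s)|^{\beta+1})$; and the last pieces by the local Lipschitz Assumption~\ref{as:Polynomial growth} together with Lemma~\ref{lemma1}, contributing powers of $\Delta^{1/2}\kappa(\Delta)$. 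After separating the $|e|^{\overline{q}}$ parts with Young's inequality, taking expectations and using the moment bounds, I expect a Gronwall inequality $\E_W|e(t\wedge\theta)|^{\overline{q}}\le C\int_{t_0}^{t}\E_W|e(s\wedge\theta)|^{\overline{q}}\diff{s}+C(\Delta^{\gamma\overline{q}}+\Delta^{\alpha\overline{q}}+\Delta^{\overline{q}/2}\kappa(\Delta)^{\overline{q}})$.

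Gronwall's lemma then bounds $\E_W[|e(t)|^{\overline{q}}\one_{\{\theta>T\}}]$ by $C(\Delta^{\gamma\overline{q}}+\Delta^{\alpha\overline{q}}+\Delta^{\overline{q}/2}\kappa(\Delta)^{\overline{q}})$, and it remains to calibrate $\kappa$ and $R$. I would let $\kappa(\Delta)$ grow only like $\Delta^{-\varepsilon}$ (permissible since $\Delta^{1/4}\kappa(\Delta)\le\hat h$ only forces $\varepsilon<1/4$), so that $\Delta^{\overline{q}/2}\kappa(\Delta)^{\overline{q}}\le C\Delta^{(1/2-\varepsilon)\overline{q}}$, and set $R=f^{-1}(\kappa(\Delta))$ with $p$ large enough that the bad-event term above is also below $\Delta^{(1/2-\varepsilon)\overline{q}}$; this gives the rate $\min(\gamma,\alpha,\tfrac12-\varepsilon)$ for $x_\Delta$. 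The estimate for $\overline{x}_\Delta$ follows at once from $\E_W|x(t)-\overline{x}_\Delta(t)|^{\overline{q}}\le 2^{\overline{q}-1}(\E_W|e(t)|^{\overline{q}}+\E_W|x_\Delta(t)-\overline{x}_\Delta(t)|^{\overline{q}})$ and one more use of Lemma~\ref{lemma1}. The hard part will be keeping the Gronwall constant independent of both $\Delta$ and the level $R$: this forces me to feed the full drift-plus-diffusion combination into Assumption~\ref{as:Khasminskii 2} rather than bounding the drift and the diffusion separately (which would cost a factor growing like $R^{2\beta}$ and ruin the estimate), and to check that the cross terms between the Khasminskii piece and the H\"older/step-process pieces can be absorbed into an $R$-independent multiple of $|e|^2$ plus additive remainders carrying genuine powers of $\Delta$; the simultaneous tuning of $\kappa(\Delta)$, $R=f^{-1}(\kappa(\Delta))$ and the moment order, where the hypothesis that (\ref{k2}) holds for every $p>2$ is indispensable, is the other delicate point.
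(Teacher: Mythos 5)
Your proposal is correct in outline but takes a genuinely different route from the paper. The paper never localises: it first proves an intermediate general estimate (Theorem \ref{main theorem 1}) by applying It\^o's formula to $|e_\Delta(t)|^{\overline q}$ on all of $[t_0,T]$, decomposing the coefficient differences into a Khasminskii piece ($I_1$), a temporal-H\"older piece ($I_2$) and a step/truncation piece ($I_3$), and it handles the truncation not by switching it off but by comparing $\mu(\tau(s),\overline x_\Delta(s))$ with $\mu_\Delta(\tau(s),\overline x_\Delta(s))=\mu(\tau(s),\pi_\Delta(\overline x_\Delta(s)))$: these differ only on the event $\{|\overline x_\Delta(s)|>f^{-1}(\kappa(\Delta))\}$, and H\"older plus Chebyshev with the pointwise moment bound of Lemma \ref{lemma3} converts that event into the term $(f^{-1}(\kappa(\Delta)))^{(\beta+1)\overline q-p}$. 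Theorem \ref{main theorem 2} then follows by exactly your final calibration ($f(u)=Ku^{\beta+1}$, $\kappa(\Delta)=\Delta^{-\varepsilon}$, $p$ large). Your stopping-time localisation ($\theta=\tau_R\wedge\rho_{\Delta,R}$, truncation inactive before $\theta$, H\"older/Chebyshev on $\{\theta\le T\}$) is instead the strategy of \cite{M2015}; inside the good event your three-way splitting, the $R$-independent Gronwall constant obtained by feeding the drift-plus-diffusion combination into Assumption \ref{as:Khasminskii 2}, and the concluding Gronwall step reproduce the paper's $I_1$--$I_3$ estimates almost term by term. What the paper's route buys is self-containedness (it uses only Lemmas \ref{lemma1}--\ref{lemma3} as stated) plus the more general Theorem \ref{main theorem 1} as a by-product; what yours buys is the conceptual simplification that the truncation literally never acts on the good event.

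There is, however, one step of yours that fails as written: the claim that $\P_W(\theta\le T)\le CR^{-p}$ follows from ``Chebyshev's inequality and the same moment bounds.'' Lemmas \ref{lemma2} and \ref{lemma3} give $\sup_{t_0\le t\le T}\E_W|x(t)|^p<\infty$ and $\sup_{0<\Delta\le 1}\sup_{t_0\le t\le T}\E_W|x_\Delta(t)|^p\le C$, with the supremum \emph{outside} the expectation; Chebyshev applied to these controls $\sup_{t}\P_W(|x(t)|\ge R)$, not the exit probability $\P_W(\tau_R\le T)=\P_W(\sup_{t_0\le t\le T}|x(t)|\ge R)$, and similarly for $\rho_{\Delta,R}$. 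To bound the exit probabilities you must rerun the moment arguments for the \emph{stopped} processes, applying It\^o's formula to $|x(t\wedge\tau_R)|^p$ and $|x_\Delta(t\wedge\rho_{\Delta,R})|^p$ and using Assumption \ref{as:Khasminskii} (together with the fact that truncation preserves it with $\Delta$-independent constants) to get bounds uniform in $R$ and $\Delta$; these are precisely Lemmas 3.4 and 3.6 of \cite{M2015}, standard but not derivable from the lemmas quoted in your sketch, so they must be added. Finally, a caveat you share with the paper: absorbing the middle pieces into Assumption \ref{as:Khasminskii 2} requires the post-Young coefficient in front of the squared diffusion difference to be at most $(q-1)/2$, which forces $\overline q<q$; the paper's proof inherits this restriction from Theorem \ref{main theorem 1} (stated for $\overline q\in[2,q)$), so neither argument actually delivers \emph{all} $\overline q\ge 2$ unless Assumption \ref{as:Khasminskii 2} is also strengthened to hold for every $q>2$.
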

\begin{remark}
To obtain the results hold for any $\overline{q} \geq 2$ and arbitrarily $\varepsilon \in (0,1/4)$, Assumption \ref{as:Khasminskii} is strengthened by requiring  (\ref{k2}) to hold for any $p > 2$ instead of some $p > 2$ in Theorem \ref{main theorem 2}. In this circumstance, the $L_2$ in (\ref{k2})  is no longer dependent on $p$.
\end{remark}
To prove Theorem \ref{main theorem 2}, we show Theorem \ref{main theorem 1} firstly, in which the format of the convergence rate is a bit complicated. The proof of Theorem \ref{main theorem 2} is postponed after the proof of the following theorem.
\par
It should be noted that in Theorem \ref{main theorem 1} the Assumption \ref{as:Khasminskii} is not required to be strengthened compared with Theorem \ref{main theorem 2}.

\begin{theorem}
\label{main theorem 1}
Let Assumptions \ref{as:Polynomial growth}, \ref{as:Khasminskii 2}, \ref{as:Khasminskii} and \ref{as:superlinear growth} hold and assume that $p>(1+\beta)q$. Then, for any $\overline{q}\in[2,q)$ and $\Delta\in(0,1]$
\begin{align}
\label{assertion1}
\E_W|x(t)-x_\Delta(t)|^{\overline{q}} \leq C\bigg(\left(f^{-1}(\kappa(\Delta))\right)^{[(1+\beta)\overline{q}-p]/p}+\Delta^{\overline{q}/2}\left(\kappa(\Delta)\right)^{\overline{q}}+\Delta^{\gamma \overline{q}}+\Delta^{\alpha\overline{q}}\bigg),
\end{align}
and
\begin{align}
\label{assertion2}
\E_W|x(t)-\overline{x}_\Delta(t)|^{\overline{q}} \leq C\bigg(\left(f^{-1}(\kappa(\Delta))\right)^{[(1+\beta)\overline{q}-p]/p}+\Delta^{\overline{q}/2}\left(\kappa(\Delta)\right)^{\overline{q}}+\Delta^{\gamma \overline{q}}+\Delta^{\alpha\overline{q}}\bigg).
\end{align}
\end{theorem}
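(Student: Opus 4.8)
The plan is to estimate $\E_W|x(t)-x_\Delta(t)|^{\overline q}$ directly by applying It\^o's formula to $|e(t)|^{\overline q}$, where $e(t):=x(t)-x_\Delta(t)$, and then to close a Gronwall inequality. Since $\overline q\ge 2$, after the usual absorption of the $|e^T(\sigma^r-\sigma^r_\Delta)|^2$ terms into $|e|^{\overline q-2}\sum_r|\sigma^r-\sigma^r_\Delta|^2$, the drift of $|e(t)|^{\overline q}$ is dominated by
\[
\overline q\,\E_W\int_{t_0}^t|e(s)|^{\overline q-2}\Big[e(s)^T\big(\mu(s,x(s))-\mu_\Delta(\tau(s),\overline x_\Delta(s))\big)+\tfrac{\overline q-1}{2}\sum_{r=1}^m\big|\sigma^r(s,x(s))-\sigma^r_\Delta(\tau(s),\overline x_\Delta(s))\big|^2\Big]\diff s .
\]
Because $\mu_\Delta,\sigma_\Delta$ are bounded by $\kappa(\Delta)$ and $x$ has finite moments of every order by Lemma \ref{lemma2}, the stochastic integral is a true martingale and its expectation vanishes; to be rigorous I would first localise by a stopping time and pass to the limit via Fatou's lemma.

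The core is to decompose each coefficient difference into a temporal, a spatial and a truncation piece, e.g.
\[
\mu(s,x(s))-\mu_\Delta(\tau(s),\overline x_\Delta(s))=[\mu(s,x(s))-\mu(\tau(s),x(s))]+[\mu(\tau(s),x(s))-\mu(\tau(s),\overline x_\Delta(s))]+[\mu(\tau(s),\overline x_\Delta(s))-\mu_\Delta(\tau(s),\overline x_\Delta(s))],
\]
and likewise for each $\sigma^r$. For the spatial pieces I would further write $e(s)=\big(x(s)-\overline x_\Delta(s)\big)+\big(\overline x_\Delta(s)-x_\Delta(s)\big)$ so that the factor multiplying the coefficient difference in the inner product matches the coefficients' argument; Assumption \ref{as:Khasminskii 2}, applicable since $\tfrac{\overline q-1}{2}\le\tfrac{q-1}{2}$ for $\overline q<q$, then bounds this dominant contribution by $L_1|x(s)-\overline x_\Delta(s)|^2$. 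Using $|x-\overline x_\Delta|^2\le 2|e|^2+2|\overline x_\Delta-x_\Delta|^2$ produces both the Gronwall driver $\int_{t_0}^t\E_W|e(s)|^{\overline q}\diff s$ and a remainder $\E_W|e|^{\overline q-2}|\overline x_\Delta-x_\Delta|^2$ that, by Young's inequality and Lemma \ref{lemma1}, yields the $\Delta^{\overline q/2}(\kappa(\Delta))^{\overline q}$ term. The leftover cross term $(\overline x_\Delta-x_\Delta)^T[\mu(\tau,x)-\mu(\tau,\overline x_\Delta)]$ is controlled by the local Lipschitz bound of Assumption \ref{as:Polynomial growth}, Young's inequality, the moment bounds of Lemmas \ref{lemma2} and \ref{lemma3}, and Lemma \ref{lemma1}.

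The temporal pieces are handled through Assumption \ref{as:superlinear growth}: since $|s-\tau(s)|\le\Delta$, pairing $|\mu(s,x)-\mu(\tau,x)|\le K_1(1+|x|^{\beta+1})\Delta^{\gamma}$ with the factor $|e|^{\overline q-1}$ and applying Young's inequality gives, after Lemma \ref{lemma2}, the $\Delta^{\gamma\overline q}$ contribution, and the diffusion analogue gives $\Delta^{\alpha\overline q}$. The truncation pieces vanish unless $|\overline x_\Delta(s)|>f^{-1}(\kappa(\Delta))$, on which rare event both $|\mu(\tau,\overline x_\Delta)|$ and $|\mu_\Delta(\tau,\overline x_\Delta)|$ are dominated by $K|\overline x_\Delta|^{\beta+1}$ via \eqref{mu(u)}; estimating the resulting $\E_W\big(|\overline x_\Delta|^{(1+\beta)\overline q}\,\mathbb{I}_{\{|\overline x_\Delta|>f^{-1}(\kappa(\Delta))\}}\big)$ by Markov's inequality against the uniform $p$-th moment bound of Lemma \ref{lemma3} produces the negative power of $f^{-1}(\kappa(\Delta))$ recorded in \eqref{assertion1}, and it is precisely here that the standing hypothesis $p>(1+\beta)q$ is needed to make that power negative. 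Collecting the four contributions and invoking Gronwall's inequality gives \eqref{assertion1}, while \eqref{assertion2} follows from $|x-\overline x_\Delta|^{\overline q}\le 2^{\overline q-1}\big(|x-x_\Delta|^{\overline q}+|x_\Delta-\overline x_\Delta|^{\overline q}\big)$ together with Lemma \ref{lemma1}.

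I expect the main obstacle to be the bookkeeping around the spatial decomposition: one must split $e$ so that the sharp Khasminskii structure of Assumption \ref{as:Khasminskii 2} applies with its prescribed constant $\tfrac{q-1}{2}$, while keeping the coefficient of the diffusion square below $\tfrac{\overline q-1}{2}$ when expanding the squared sum of the three pieces through an inequality of the form $|a+b|^2\le(1+\eta)|a|^2+(1+\eta^{-1})|b|^2$, and ensuring that every perturbation is absorbed either into the Gronwall integral or into one of the four error quantities without degrading the rate.
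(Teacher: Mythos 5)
Your proposal is correct and takes essentially the same route as the paper's proof: It\^o's formula applied to $|e_\Delta(t)|^{\overline q}$, a Young-inequality split exploiting the gap $\overline q<q$ so that the dominant spatial part can invoke Assumption~\ref{as:Khasminskii 2} with its constant $\tfrac{q-1}{2}$, the H\"older-in-time Assumption~\ref{as:superlinear growth} for the $\Delta^{\gamma\overline q}+\Delta^{\alpha\overline q}$ terms, Lemma~\ref{lemma1} for the $\Delta^{\overline q/2}(\kappa(\Delta))^{\overline q}$ term, a Markov-inequality-plus-moment argument on the event $\{|\overline x_\Delta(s)|>f^{-1}(\kappa(\Delta))\}$ for the truncation term (where $p>(1+\beta)q$ enters), and finally Gronwall. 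The only difference is bookkeeping: the paper anchors the spatial telescoping at $x_\Delta(s)$, so the Khasminskii pairing matches $e_\Delta(s)$ exactly and the remaining step $x_\Delta(s)\to\overline x_\Delta(s)$ is handled separately (its term $I_{31}$) via Assumption~\ref{as:Polynomial growth}, whereas you anchor directly at $\overline x_\Delta(s)$ and absorb the resulting cross term $(\overline x_\Delta(s)-x_\Delta(s))^T[\mu(\tau(s),x(s))-\mu(\tau(s),\overline x_\Delta(s))]$ by the same local Lipschitz/Young/moment argument, which yields identical error terms.
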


\begin{proof}
Fix $\overline{q}=[2,q)$ and $\Delta\in(0,1]$ arbitrarily. Let $e_\Delta(t)=x(t)-x_\Delta(t)$ for $t\in[t_0,T]$. By the It\^o formula, we have for any $t_0\leq t\leq T$,
\begin{eqnarray}
\label{Ito}
\E_W|e_\Delta(t)|^{\overline{q}}
&\leq&\E_W\int_{t_0}^t\overline{q}|e_\Delta(s)|^{\overline{q}-2}\bigg(e_\Delta^T(s)[\mu(s,x(s))-\mu_\Delta(\tau(s),\overline{x}_\Delta(s))]\nonumber\\
&&+\frac{\overline{q}-1}{2}\sum_{r=1}^{m}|\sigma^r(s,x(s))-\sigma_\Delta^r(\tau(s),\overline{x}_\Delta(s))|^2\bigg)ds.
\end{eqnarray}

By the Young inequality $2ab\leq\varepsilon a^2+b^2/\varepsilon$ for any $a,b\geq0$ and $\varepsilon$ arbitrary, choosing $\varepsilon=(q-\overline{q})/(\overline{q}-1)$ leads to
\begin{eqnarray*}
&&\frac{\overline{q}-1}{2}\sum_{r=1}^{m}|\sigma^r(s,x(s))-\sigma_\Delta^r(\tau(s),\overline{x}_\Delta(s))|^2\\
&\leq&\frac{\overline{q}-1}{2}\sum_{r=1}^{m}\bigg((1+\frac{q-\overline{q}}{\overline{q}-1})|\sigma^r(s,x(s))-\sigma^r(s,x_\Delta(s))|^2\\
&&+(1+\frac{\overline{q}-1}{q-\overline{q}})|\sigma^r(s,x_\Delta(s))-\sigma_\Delta^r(\tau(s),\overline{x}_\Delta(s))|^2\bigg)\\
&=&\frac{q-1}{2}\sum_{r=1}^{m}|\sigma^r(s,x(s))-\sigma^r(s,x_\Delta(s))|^2\\
&&+\frac{(\overline{q}-1)(q-1)}{2(q-\overline{q})}\sum_{r=1}^m|\sigma^r(s,x_\Delta(s))-\sigma_\Delta^r(\tau(s),\overline{x}_\Delta(s))|^2.
\end{eqnarray*}

We can get from(\ref{Ito}) that
\begin{eqnarray*}
&&\E_W|e_\Delta(t)|^{\overline{q}}\\
&\leq&\E_W \int_{t_0}^t \overline{q}|e_\Delta(s)|^{\overline{q}-2}\bigg(e_\Delta^T(s)[\mu(s,x(s))-\mu(s,x_\Delta(s))]\\
&&+\frac{p-1}{2}\sum_{r=1}^m[\sigma^r(s,x(s))-\sigma^r(s,x_\Delta(s))]^2\bigg)\diff s\\
&&+\E_W\int_{t_0}^t\overline{q}|e_\Delta(s)|^{\overline{q}-2}e^T_\Delta(t)[\mu(s,x_\Delta(s))-\mu(\tau(s),x_\Delta(s))]\diff s\\
&&+\E_W\int_{t_0}^t\overline{q}|e_\Delta(s)|^{\overline{q}-2}e^T_\Delta(t)[\mu(\tau(s),x_\Delta(s))-\mu_\Delta(\tau(s),\overline{x}_\Delta(s))]\diff s\\
&&+\E_W\int_{t_0}^t\overline{q}|e_\Delta(s)|^{\overline{q}-2}\frac{(\overline{q}-1)(q-1)}{(q-\overline{q})}
\sum_{r=1}^m|\sigma^r(s,x_\Delta(s))-\sigma^r(\tau(s),x_\Delta(s))|^2\diff s\\
&&+\E_W\int_{t_0}^t\overline{q}|e_\Delta(s)|^{\overline{q}-2}\frac{(\overline{q}-1)(q-1)}{(q-\overline{q})}
\sum_{r=1}^m|\sigma^r(\tau(s),x_\Delta(s))-\sigma_\Delta^r(\tau(s),\overline{x}_\Delta(s))|^2\diff s.
\end{eqnarray*}

This implies
\begin{align*}
\E_W|e_\Delta(t)|^{\overline{q}}\leq I_1+I_2+I_3,
\end{align*}
where
\begin{eqnarray*}
I_1&=&\E_W \int_{t_0}^t \overline{q}|e_\Delta(s)|^{\overline{q}-2}\bigg(e_\Delta^T(s)[\mu(s,x(s))-\mu(s,x_\Delta(s))]\\
&&+\frac{p-1}{2}\sum_{r=1}^m[\sigma^r(s,x(s))-\sigma^r(s,x_\Delta(s))]^2\bigg)\diff s,
\end{eqnarray*}

\begin{eqnarray*}
I_2&=&\E_W\int_{t_0}^t\overline{q}|e_\Delta(s)|^{\overline{q}-2}\bigg(e^T_\Delta(s)[\mu(s,x_\Delta(s))-\mu(\tau(s),x_\Delta(s))]\\
&&+\frac{(\overline{q}-1)(q-1)}{(q-\overline{q})}
\sum_{r=1}^m|\sigma^r(s,x_\Delta(s))-\sigma^r(\tau(s),x_\Delta(s))|^2\bigg)\diff s,
\end{eqnarray*}
and
\begin{eqnarray*}
I_3&=&\E_W\int_{t_0}^t\overline{q}|e_\Delta(s)|^{\overline{q}-2}\bigg(e^T_\Delta(s)[\mu(\tau(s),x_\Delta(s))-\mu_\Delta(\tau(s),\overline{x}_\Delta(s))]\\
&&+\frac{(\overline{q}-1)(q-1)}{(q-\overline{q})}
\sum_{r=1}^m|\sigma^r(\tau(s),x_\Delta(s))-\sigma_\Delta^r(\tau(s),\overline{x}_\Delta(s))|^2\bigg)\diff s.
\end{eqnarray*}
By Assumption~\ref{as:Khasminskii 2}, we have
\begin{align}
\label{I1}
I_1 \leq C_1 \E_W \int_{t_0}^t |e_\Delta(s)|^{\overline{q}}\diff s,
\end{align}
where $C_1=K_2\overline{q}.$
Using the Young inequality and Assumption~\ref{as:superlinear growth}, we can derive
\begin{eqnarray*}
I_2&\leq& \E_W\int_{t_0}^t\overline{q}|e_\Delta(s)|^{\overline{q}-2}\bigg(\frac12 |e_\Delta(s)|^2+\frac12|\mu(s,x_\Delta(s))-\mu(\tau(s),x_\Delta(s))|^2\\
&&+\frac{(\overline{q}-1)(q-1)}{(q-\overline{q})}
\sum_{r=1}^m|\sigma^r(s,x_\Delta(s))-\sigma^r(\tau(s),x_\Delta(s))|^2\bigg)\diff s\\
&\leq& C_2\bigg(\E_W \int_{t_0}^t |e_\Delta(s)|^{\overline{q}}ds+\E_W \int_{t_0}^t|\mu(s,x_\Delta(s))-\mu(\tau(s),x_\Delta(s))|^{\overline{q}}\diff s\\
&&+\frac{2(\overline{q}-1)(q-1)}{(q-\overline{q})}
\sum_{r=1}^m\E_W \int_{t_0}^t |\sigma^r(s,x_\Delta(s))-\sigma^r(\tau(s),x_\Delta(s))|^{\overline{q}}\diff s\bigg)\\
&\leq& C_2 \bigg(\E_W\int_{t_0}^t|e_\Delta(s)|^{\overline{q}}ds+\E_W\int_{t_0}^t K_1^{\overline{q}}(1+|x_\Delta(s)|^{\beta_1 \overline{q}})\Delta^{\gamma\overline{q}}\diff s\\
&&+\E_W\int_{t_0}^t K_2^{\overline{q}}(1+|x_\Delta(s)|^{\beta_2 \overline{q}})\Delta^{\alpha\overline{q}}\diff s\bigg).
\end{eqnarray*}
Then by Lemma~\ref{lemma2}, we obtain
\begin{align}
\label{I2}
I_2\leq C_2\bigg(\E_W\int_{t_0}^t|e_\Delta(s)|^{\overline{q}}\diff s+\Delta^{\gamma \overline{q}}+\Delta^{\alpha \overline{q}}\bigg).
\end{align}
Rearranging $I_3$ gives
\begin{eqnarray}
\label{I3}
I_3&\leq&\E_W\int_{t_0}^t\overline{q}|e_\Delta(s)|^{\overline{q}-2}\bigg(e^T_\Delta(t)[\mu(\tau(s),x_\Delta(s))-\mu(\tau(s),\overline{x}_\Delta(s))]\nonumber \\
&&+\frac{2(\overline{q}-1)(q-1)}{(q-\overline{q})}
\sum_{r=1}^m|\sigma^r(\tau(s),x_\Delta(s))-\sigma^r(\tau(s),\overline{x}_\Delta(s))|^2\bigg)\diff s \nonumber \\
&&+\E_W\int_{t_0}^t\overline{q}|e_\Delta(s)|^{\overline{q}-2}\bigg(e^T_\Delta(t)[\mu(\tau(s),\overline{x}_\Delta(s))-\mu_\Delta(\tau(s),\overline{x}_\Delta(s))]\nonumber \\
&&+\frac{2(\overline{q}-1)(q-1)}{(q-\overline{q})}
\sum_{r=1}^m|\sigma^r(\tau(s),\overline{x}_\Delta(s))-\sigma_\Delta^r(\tau(s),\overline{x}_\Delta(s))|^2\bigg)\diff s \nonumber \\
&:=&I_{31}+I_{32}.
\end{eqnarray}
By using the Young inequality and Assumption \ref{as:Polynomial growth}
we can show that
\begin{eqnarray*}
I_{31}&\leq&\E_W\int_{t_0}^t\overline{q}|e_\Delta(s)|^{\overline{q}-2}\bigg(\frac12|e^T_\Delta(t)|^2+
\frac12|\mu(\tau(s),x_\Delta(s))-\mu(\tau(s),\overline{x}_\Delta(s))|^2\\
&&+\frac{2(\overline{q}-1)(q-1)}{(q-\overline{q})}
\sum_{r=1}^m|\sigma^r(\tau(s),x_\Delta(s))-\sigma^r(\tau(s),\overline{x}_\Delta(s))|^2\bigg)\diff s\\
&\leq& C_{31}\bigg(\E_W\int_{t_0}^t|e_\Delta(s)|^{\overline{q}}\diff s
+\E_W\int_{t_0}^t|\mu(\tau(s),x_\Delta(s))-\mu(\tau(s),\overline{x}_\Delta(s))|^{\overline{q}}\\
&&+\sum_{r=1}^m|\sigma^r(\tau(s),x_\Delta(s))-\sigma^r(\tau(s),\overline{x}_\Delta(s))|^{\overline{q}}\diff s\bigg)\\
&\leq& C_{31}\bigg(\E_W\int_{t_0}^t|e_\Delta(s)|^{\overline{q}}\diff s\\
&&+M\E_W\int_{t_0}^t(1+|x_\Delta(s)|^{\beta\overline{q}}+|\overline{x}_\Delta(s)|^{\beta\overline{q}})
|x_\Delta(s)-\overline{x}_\Delta(s)|^{\overline{q}}\diff s\bigg).
\end{eqnarray*}
Then, by the H\"older inequality, Lemma~\ref{lemma2} and Lemma~\ref{lemma1}, we arrive at
\begin{eqnarray}
\label{I31}
I_{31}&\leq& C_{31}\bigg(\E_W\int_{t_0}^t|e_\Delta(s)|^{\overline{q}}\diff s+\int_{t_0}^t(\E_W|x_\Delta(s)-\overline{x}_\Delta(s)|^{p})^\frac{\overline{q}}{p}\diff s\bigg)\nonumber\\
&\leq& C_{31}\bigg(\E_W\int_{t_0}^t|e_\Delta(s)|^{\overline{q}}\diff s+\Delta^{\frac{\overline{q}}{2}}(\kappa(\Delta))^{\overline{q}}\diff s\bigg).
\end{eqnarray}
Similarly, we can show that
\begin{eqnarray*}
I_{32}&\leq& C_{32}\bigg(\E_W\int_{t_0}^t|e_\Delta(s)|^{\overline{q}}\diff s+\E_W\int_{t_0}^t|\mu(\tau(s),\overline{x}_\Delta(s))-\mu_\Delta(\tau(s),\overline{x}_\Delta(s))|^{\overline{q}}\\
&&+\sum_{r=1}^m|\sigma^r(\tau(s),\overline{x}_\Delta(s))-\sigma_\Delta^r(\tau(s),\overline{x}_\Delta(s))|^{\overline{q}}\diff s\bigg).
\end{eqnarray*}

Recalling the definition of truncated EM method (\ref{definition}) and Assumption~\ref{as:Polynomial growth} gives
\begin{eqnarray*}
I_{32}&\leq& C_{32}\bigg(\E_W\int_{t_0}^t|e_\Delta(s)|^{\overline{q}}\diff s+\E_W\int_{t_0}^t|\mu(\tau(s),\overline{x}_\Delta(s))-\mu(\tau(s),\pi_\Delta(\overline{x}_\Delta(s)))|^{\overline{q}}\\
&&+\sum_{r=1}^m|\sigma^r(\tau(s),\overline{x}_\Delta(s))-\sigma_\Delta^r(\tau(s),\pi_\Delta(\overline{x}_\Delta(s))|^{\overline{q}})\diff s\bigg)\\
&\leq& C_{32}\bigg(\E_W\int_{t_0}^t|e_\Delta(s)|^{\overline{q}}\diff s\\
&&+M\E_W\int_{t_0}^t(1+|\overline{x}_\Delta(s)|^{\beta\overline{q}}+|\pi_\Delta (\overline{x}_\Delta(s))|^{\beta\overline{q}})
|\overline{x}_\Delta(s)-\pi_\Delta (\overline{x}_\Delta(s))|^{\overline{q}}\diff s\bigg).
\end{eqnarray*}
By the H\"older inequality, we obtain
\begin{eqnarray}
\label{I32}
I_{32}&\leq& C_{32}\bigg(\E_W\int_{t_0}^t|e_\Delta(s)|^{\overline{q}}\diff s+\int_{t_0}^t[\E_W(1+|\overline{x}_\Delta(s)|^p+|\pi_\Delta(\overline{x}_\Delta(s))|^p)]^{\frac{\beta\overline{q}}{p}}\nonumber\\
&&\times(\E_W|\overline{x}_\Delta(s)-\pi_\Delta(\overline{x}_\Delta(s))|^{\frac{p\overline{q}}{p-\beta\overline{q}}})^{\frac{p-\beta\overline{q}}{p}}\diff s\bigg)\nonumber\\
&\leq& C_{32}\bigg(\E_W\int_{t_0}^t|e_\Delta(s)|^{\overline{q}}\diff s+\int_{t_0}^t (\E_W[I_{\{|\overline{x}_\Delta(s)|>f^{-1}(\kappa(\Delta))\}}|x_\Delta(s)|^{\frac{p\overline{q}}{p-\beta\overline{q}}}])^{\frac{p-\beta\overline{q}}{p}}
\diff s\bigg)\nonumber\\
&\leq& C_{32}\bigg(\E_W\int_{t_0}^t|e_\Delta(s)|^{\overline{q}}\diff s \nonumber \\
&&+\int_{t_0}^t([\P\{|\overline{x}_\Delta(s)|>f^{-1}(\kappa(\Delta))\}]^{\frac{p-\beta\overline{q}-\overline{q}}{p-\beta\overline{q}}}
[\E_W|\overline{x}_\Delta(s))|^p]^{\frac{\overline{q}}{p-\beta\overline{q}}})^{\frac{p-\beta\overline{q}}{p}}\diff s\bigg)\nonumber\\
&\leq& C_{32}\bigg(\E_W\int_{t_0}^t|e_\Delta(s)|^{\overline{q}}\diff s+\int_{t_0}^T \left(\frac{\E_W|\overline{x}_\Delta(s)|^p}{(f^{-1}(\kappa(\Delta)))^p}\right)^{\frac{p-\beta\overline{q}-\overline{q}}{p}}\diff s\bigg)\nonumber\\
&\leq& C_{32}\bigg(\E_W\int_{t_0}^t|e_\Delta(s)|^{\overline{q}}\diff s+(f^{-1}(\kappa(\Delta)))^{(\beta+1)\overline{q}-p}\bigg).
\end{eqnarray}
Substituting (\ref{I31}) and (\ref{I32}) into (\ref{I3}), we arrive at
\begin{align}
\label{I33}
I_3\leq C_3\bigg(\E_W\int_{t_0}^t|e_\Delta(s)|^{\overline{q}}\diff s+\Delta^{\frac{\overline{q}}{2}}(\kappa(\Delta))^{\overline{q}}+(f^{-1}(\kappa(\Delta)))^{(\beta+1)\overline{q}-p}\bigg).
\end{align}
Then (\ref{I1}), (\ref{I2}) and (\ref{I33}) together imply that
\begin{align*}
\E_W|e_\Delta(t)|^{\overline{q}}\leq C\bigg(\E_W\int_{t_0}^t|e_\Delta(s)|^{\overline{q}}\diff s+(f^{-1}(\kappa(\Delta)))^{(\beta+1)\overline{q}-p}+\Delta^{\frac{\overline{q}}{2}}(\kappa(\Delta))^{\overline{q}}+\Delta^{\gamma\overline{q}}+
\Delta^{\alpha\overline{q}}\bigg).
\end{align*}
An application of the Gronwall inequality yields that
\begin{align*}
\E_W|e_\Delta(t)|^{\overline{q}}\leq C\bigg((f^{-1}(\kappa(\Delta)))^{(\beta+1)\overline{q}-p}+\Delta^{\frac{\overline{q}}{2}}(\kappa(\Delta))^{\overline{q}}+\Delta^{\gamma\overline{q}}
+\Delta^{\alpha\overline{q}}\bigg),
\end{align*}
which is the required assertion (\ref{assertion1}). The other assertion (\ref{assertion2}) follows from (\ref{assertion1}) and Lemma \ref{lemma1}. Therefore, the proof is completed.
\end{proof}

Now, we are ready to give the proof of Theorem \ref{main theorem 2}.

\par \noindent
{\it \bf Proof of Theorem \ref{main theorem 2}}
\par
 Recalling (\ref{mu(u)}), we then define
\begin{align*}
f(u)=Ku^{\beta+1},~ u\geq1,
\end{align*}
which implies that
\begin{align*}
f^{-1}(u)=\left(\frac{u}{K}\right)^{\frac{1}{\beta+1}}.
\end{align*}
Let
\begin{align*}
\kappa(\Delta)=\Delta^{-\varepsilon} ~ \text{for} ~ \text{some}~\varepsilon\in(0,\frac14) ~\text{and} ~\hat{h}\geq1.
\end{align*}
Following Theorem \ref{main theorem 1}, we obtain
\begin{align}
\label{assertion3}
\E_W|x(t)-x_\Delta(t)|^{\overline{q}}\leq C\left(\Delta^\frac{\varepsilon(p-\beta\overline{q}-\overline{q})}{\beta+1}+\Delta^{\frac{\overline{q}(1-2\varepsilon)}{2}}+\Delta^{\gamma \overline{q}}+\Delta^{\alpha \overline{q}}\right),
\end{align}
and
\begin{align}
\label{assertion4}
\E_W|x(t)-\overline{x}_\Delta(t)|^{\overline{q}}\leq C\left(\Delta^\frac{\varepsilon(p-\beta\overline{q}-\overline{q})}{\beta+1}+\Delta^{\frac{\overline{q}(1-2\varepsilon)}{2}}+\Delta^{\gamma \overline{q}}+\Delta^{\alpha \overline{q}}\right).
\end{align}
Choosing $p$ sufficiently large for
$$\frac{\varepsilon(p-\beta\overline{q}-\overline{q})}{\beta+1}>\min({\gamma,\alpha,\frac12-\varepsilon})\overline{q},$$
we can draw the assertions from (\ref{assertion3}) and (\ref{assertion4}) immediately. \eproof

\section{Main results on time-changed SDEs}\label{sec:tcSDE}
This section is divided into two parts. In Section \ref{subsec:tcSDEpl}, mathematical preliminaries about time-changed SDEs are presented together with some useful lemmas. The result on the strong convergence of the truncated EM method is presented in Section \ref{subsec:tcSDEstroconv}.

\subsection{Mathematical preliminaries for time-changed SDEs}\label{subsec:tcSDEpl}
Let $D(t)$ be an RCLL increasing L\'evy process defined on a complete probability space $(\Omega_D , \F^D, \P_D)$ with a filtration $\left\{\F^D_t\right\}_{t \ge 0}$ satisfying the usual conditions. Let $\E_D$ denote the expectation under the probability measure $\P_D$. $D(t)$ is called subordinator starting from 0 if the Laplace transform is given by
\begin{equation*}
\E_D e^{-\lambda D(t)} = e^{-t \phi(\lambda)},
\end{equation*}
where the Laplace exponent is
\begin{equation*}
\phi (\lambda) = \int_0^{\infty} \left( 1 - e^{-\lambda x} \right) \nu (dx),
\end{equation*}
with $\int_0^{\infty} (x \wedge 1) \nu (dx) < \infty$. We focus on the case when the L\'evy measure $\nu$ is infinity, i.e. $\nu(0,\infty) = \infty$., which implies that $D(t)$ has strictly increasing paths with infinitely many jumps and excludes the compound Poisson subordinator.
\par
Let $E(t)$ be the inverse of $D(t)$, i.e.
\begin{equation*}
E(t) := \inf\{ u > 0; D(u) > t \}, ~ t \geq 0.
\end{equation*}
We call $E(t)$ an inverse subordinator.
\par
Assume that $W(t)$ and $D(t)$ are independent. Define the product probability space by
\begin{equation*}
(\Omega , \F, \P):= (\Omega_W \times \Omega_D, \F^W\otimes \F^D, \P_W \otimes \P_D).
\end{equation*}
Let $\E$ denote the expectation under the probability measure $\P$. It is clear that $\E(\cdot) = \E_D \E_W (\cdot) = \E_W \E_D (\cdot)$.
\par
In this section, we consider the following time-changed SDE
\begin{equation}\label{tcSDE}
dy(t) = \mu(E(t),y(t))dE(t) + \sigma(E(t),y(t))dW(E(t)), ~ t\in [0,T],
\end{equation}
with the initial value $y(0) = y_0$. Here, for the simplicity of the notation, we only consider the scale Wiener process $W$ (i.e. $m=1$ in Sections \ref{mathpre} and \ref{mainresult}).
\par
According to the duality principle in \cite{Kob2011}, the time-changed SDE \eqref{tcSDE} and the classical SDE of It\^o type
\begin{equation}\label{SDErttcSDE}
dx(t) = \mu(t,x(t))dt + \sigma(t,x(t))dW(t)
\end{equation}
have a deep connection. The next theorem states such a relation more precisely, which is borrowed from Theorem 4.2 in \cite{Kob2011}.
\begin{theorem}\label{lemma:equivxandy}
Suppose Assumptions \ref{as:Polynomial growth}, \ref{as:Khasminskii 2}, \ref{as:Khasminskii} and \ref{as:superlinear growth} hold. If $x(t)$ is the unique solution to the SDE \eqref{SDErttcSDE}, then the time-changed process $x(E(t))$, which is an $\F^W_{E(t)}$-semimartingale, is the unique solution to the time-changed SDE \eqref{tcSDE}. On the other hand, if $y(t)$ is the unique solution to the time-changed SDE \eqref{tcSDE}, then the process $y(D(t))$, which is an $\F^W_t$-semimartingale, is the unique solution to the SDE \eqref{SDErttcSDE}.
\end{theorem}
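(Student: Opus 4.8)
The plan is to prove both implications by transferring the integral form of each equation through the random time change, relying on two change-of-variable results: an elementary one for the Lebesgue--Stieltjes drift integral and the time-change theorem for It\^o stochastic integrals. Throughout I use the defining properties of the inverse subordinator: $E$ is continuous and non-decreasing, $D$ is strictly increasing so that $E(D(t))=t$ for every $t$, and $D$ is independent of $W$ with its whole path available. To make the time change rigorous I would work in the product filtration $\{\F^W_u\otimes\F^D\}_{u\ge0}$, under which $W$ remains a Brownian motion (by independence) while each $E(t)$ becomes a stopping time, since $\{E(t)\le u\}=\{D(u)>t\}\in\F^D\subseteq\F^W_u\otimes\F^D$. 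This is what makes $\{\F^W_{E(t)}\}$ a genuine filtration and lets the subordinated process $W(E(\cdot))$ be handled by optional-stopping arguments.

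First I would establish the forward direction. Let $x$ be the unique solution of \eqref{SDErttcSDE}, which exists with finite moments by Lemma \ref{lemma2}, and write it in integral form. Evaluating at $E(t)$ gives
\[
x(E(t)) = x_0 + \int_0^{E(t)} \mu(s,x(s))\diff s + \int_0^{E(t)} \sigma(s,x(s))\diff W(s).
\]
For the drift term I would substitute $s=E(u)$: because $E$ is continuous and non-decreasing, with its flat stretches corresponding to the jumps of $D$ and contributing nothing since $\diff E(u)=0$ there, one obtains $\int_0^{E(t)}\mu(s,x(s))\diff s=\int_0^t\mu(E(u),x(E(u)))\diff E(u)$. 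For the stochastic term I would invoke the time-change theorem for the It\^o integral along the stopping times $E(t)$, yielding $\int_0^{E(t)}\sigma(s,x(s))\diff W(s)=\int_0^t\sigma(E(u),x(E(u)))\diff W(E(u))$. Substituting shows that $y(t):=x(E(t))$ satisfies \eqref{tcSDE} and is an $\F^W_{E(t)}$-semimartingale by construction.

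For the converse I would reverse the procedure. Given a solution $y$ of \eqref{tcSDE}, set $x(t):=y(D(t))$ and use the strict monotonicity of $D$ together with $E(D(t))=t$ to undo the time change; applying the same two change-of-variable identities, now with $D$ in place of $E$ and noting that each $D(t)$ is a stopping time for the subordinated filtration, recovers the integral form of \eqref{SDErttcSDE}. Uniqueness in each direction then follows by composing with the opposite time change: two solutions of \eqref{tcSDE} would produce, through $y(D(\cdot))$, two solutions of \eqref{SDErttcSDE}, contradicting the uniqueness granted by Lemma \ref{lemma2}, and symmetrically for the other direction.

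The main obstacle is the rigorous justification of the stochastic-integral time-change identity. Unlike the pathwise drift integral, this step is not a mere substitution: it requires that $W(E(\cdot))$ be a continuous local martingale with respect to $\{\F^W_{E(t)}\}$, that its quadratic variation equal $E(\cdot)$, and that the integrand $\sigma(E(u),x(E(u)))$ be adapted and square-integrable against $\diff E(u)$. Controlling the measurability and adaptedness under the subordinated filtration, and handling the jumps of $D$ (equivalently the flat parts of $E$) so that no integral mass is created or lost, is where the care is needed; the growth bounds of Assumptions \ref{as:Polynomial growth}--\ref{as:superlinear growth} together with the moment estimate of Lemma \ref{lemma2} supply the integrability that makes the time-change theorem applicable.
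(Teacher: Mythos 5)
The paper does not actually prove this theorem: it is quoted directly from Theorem 4.2 of \cite{Kob2011} (the duality principle), so there is no in-paper argument to compare against. Your sketch is, in essence, a reconstruction of the proof given in that reference: Kobayashi likewise writes both equations in integral form and transfers them through the random time change by means of his first change-of-variables formula (Theorem 3.1 in \cite{Kob2011}), which is exactly the pair of identities you invoke --- the Lebesgue--Stieltjes substitution for the drift term and the time-change theorem for the It\^o integral, the latter resting on $W(E(\cdot))$ being a continuous local martingale for the filtration $\{\F^W_{E(t)}\}$ with quadratic variation $E(\cdot)$, and on each $E(t)$ being a finite stopping time of the enlarged filtration $\{\F^W_u\otimes\F^D\}$. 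So your route is the standard one and the outline is sound; the step you flag as the ``main obstacle'' is precisely the content of the cited change-of-variables theorem, and invoking it as a known result is legitimate. Two points would need tightening in a full write-up. First, your uniqueness argument (composing with the opposite time change) needs the observation that any solution $y$ of \eqref{tcSDE} satisfies $y(t)=y(D(E(t)))$, i.e.\ is constant on the intervals where $E$ is flat --- true because both driving processes $E$ and $W(E(\cdot))$ are constant there; without this, $y_1(D(\cdot))=y_2(D(\cdot))$ does not immediately yield $y_1=y_2$, since $D(E(t))$ can exceed $t$. Second, in the converse direction the time change $D$ has jumps, so the substitution requires that the processes being time-changed are constant on each jump interval $[D(s-),D(s)]$ --- again true because $E$ is constant there --- and this deserves explicit mention rather than being folded into ``the same two identities.''
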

The plan to numerically approximate the time-changed SDE \eqref{tcSDE} in this section is as follows. Firstly, we discretize the inverse subordinator $E(t)$ to get $E_\Delta(t)$. Then the combination, $x_\Delta(E_\Delta(t))$, of the truncated EM solution to the SDE \eqref{SDErttcSDE}, $x_\Delta(t)$, and the discretized inverse subordinator, $E_\Delta(t)$, is used to approximate the solution to the time-changed SDE \eqref{tcSDE}.
\par
To approximate the $E(t)$ in a given time interval $[0,T]$, we follow the idea in \cite{GM2010}. Firstly, we simulate the path of $D(t)$ by $D_\Delta(t_i) = D_\Delta(t_{i-1} )+ \xi_i$ with $D(t_0) = 0$, where $\xi_i$ is independently identically sequence with $\xi_i = D(t_1)$ in distribution. The process is stopped when
\begin{equation*}
T \in [ D_\Delta (t_{n}), D_\Delta (t_{n+1})),
\end{equation*}
for some $n$. Then the approximate $E_\Delta(t)$ to $E(t)$ is generated by
\begin{equation}\label{findEht}
E_\Delta(t) = (\min\{n; D_\Delta (t_n) > t\} - 1)\Delta,
\end{equation}
for $t \in [0,T]$. It is easy to see
\begin{equation*}
E_\Delta(t) = i\Delta,~~~\text{when}~t \in   [ D_\Delta (t_{i}), D_\Delta (t_{i+1})).
\end{equation*}

The next lemma provides the approximation error of $E_\Delta(t)$ to $E(t)$, whose proof can be found in \cite{JK2016,Mag2009}.
\begin{lemma}\label{Eterror}
Let $E(t)$ be the inverse of a subordinator $D(t)$ with infinite L\'evy measure. Then for any $t \in [0,T]$
\begin{equation*}
E(t) - \Delta \leq E_\Delta(t) \leq E(t)~~~\text{a.s.}
\end{equation*}
\end{lemma}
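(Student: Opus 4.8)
The plan is to place $E$ and $E_\Delta$ on a common probability space and then read off both inequalities from the monotonicity of the subordinator. First I would note that, because $D$ has stationary and independent increments, the random walk $D_\Delta(t_i)=\sum_{j=1}^i \xi_j$ built from the i.i.d.\ increments $\xi_j \stackrel{d}{=} D(t_1)=D(\Delta)$ has, at the grid points $t_i=i\Delta$, exactly the same finite-dimensional law as the sampled process $\bigl(D(i\Delta)\bigr)_{i\ge 0}$. Hence I may (and will) realise the two on the same space so that $D_\Delta(t_i)=D(i\Delta)$ for all $i$, almost surely. This is what turns the statement, which involves both $E$ (a functional of the path $D$) and $E_\Delta$ (a functional of $D_\Delta$), into a genuine pathwise comparison for which the notation ``a.s.'' is meaningful.

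Next I would fix such an $\omega$ in the full-measure set on which $D(\cdot,\omega)$ is non-decreasing, right-continuous and tends to $+\infty$, and fix $t\in[0,T]$. The half-open intervals $[D(i\Delta),D((i+1)\Delta))$ cover $[0,\infty)$, so by \eqref{findEht} there is an index $i\ge 0$ with $E_\Delta(t)=i\Delta$, characterised by $t\in[D(i\Delta),D((i+1)\Delta))$; in particular $D(i\Delta)\le t<D((i+1)\Delta)$. The remaining work is to compare $i\Delta$ with $E(t)=\inf\{u>0:D(u)>t\}$.

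For the upper bound $E_\Delta(t)\le E(t)$ I would use monotonicity of $D$: since $D(i\Delta)\le t$, every $u\le i\Delta$ satisfies $D(u)\le D(i\Delta)\le t$, so $u\notin\{u:D(u)>t\}$; thus this level set is contained in $(i\Delta,\infty)$ and its infimum $E(t)$ is at least $i\Delta=E_\Delta(t)$. For the lower bound $E(t)-\Delta\le E_\Delta(t)$, the strict inequality $D((i+1)\Delta)>t$ shows that $(i+1)\Delta$ belongs to $\{u:D(u)>t\}$, whence $E(t)\le(i+1)\Delta=E_\Delta(t)+\Delta$. Combining the two gives $E(t)-\Delta\le E_\Delta(t)\le E(t)$ for this $\omega$, and since $\omega$ was arbitrary in a full-measure set the claim follows a.s.

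Finally I would note that these bounds use only the monotonicity of $D$ together with $D\to\infty$; the assumption of infinite L\'evy measure (strictly increasing paths) is what guarantees $E$ is finite and continuous, so that the model is a genuine subdiffusion, but it is not otherwise needed for the two estimates. The only genuinely delicate point is the first step, namely the identification $D_\Delta(t_i)=D(i\Delta)$ that makes the almost-sure comparison legitimate; everything after it is elementary bookkeeping with the level set $\{u:D(u)>t\}$.
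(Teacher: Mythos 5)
Your proof is correct. Note that the paper itself does not prove this lemma---it is quoted from the cited references (Jum--Kobayashi 2016, Magdziarz 2009)---and your argument is essentially the standard one given there: identify the index $i$ with $D(i\Delta)\le t<D((i+1)\Delta)$, then read off $E_\Delta(t)\le E(t)$ from monotonicity of $D$ and $E(t)\le(i+1)\Delta$ from the strict inequality on the right endpoint. The one point where you go beyond the sources is the explicit coupling step: in the references the discretized process is \emph{defined} as the subordinator sampled at the grid points, $D_\Delta(t_i)=D(i\Delta)$, so the pathwise comparison is automatic; since this paper instead describes $D_\Delta$ as a random walk built from i.i.d.\ copies of $D(\Delta)$, your observation that the ``a.s.'' statement only makes sense after realising $D_\Delta(t_i)=D(i\Delta)$ on a common space is a genuine and necessary clarification, and you handle it correctly via equality of finite-dimensional laws.
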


The following lemma states that any inverse subordinator $E(t)$ with infinite L\'evy measure is known to have the exponential moment \cite{JK2016,MOW2011}.
\begin{lemma}\label{expmonfinite}
Let $E(t)$ be the inverse of a subordinator $D(t)$ with Laplace exponent $\phi$ and infinite L\'evy measure, then for any $C \in \R$ and $t \geq 0$,
\begin{equation*}
\E_D \left( e^{CE(t)} \right) < \infty.
\end{equation*}
\end{lemma}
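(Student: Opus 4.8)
The plan is to establish that $\E_D(e^{CE(t)}) < \infty$ for any $C \in \R$ and $t \geq 0$ by passing through the Laplace transform of the subordinator $D$ and exploiting the duality between $E(t)$ and $D(u)$. The crucial observation is that the event $\{E(t) \geq u\}$ is, by definition of the inverse subordinator, equal (up to a null set) to the event $\{D(u^-) \leq t\}$, so that the tail of $E(t)$ is controlled by the left tail of $D(u)$. Since $D$ is a subordinator with infinite L\'evy measure, its paths are strictly increasing, and one expects $\P_D(D(u) \leq t)$ to decay exponentially fast in $u$ for fixed $t$; this exponential decay is exactly what is needed to absorb the factor $e^{Cu}$ in the integral defining the exponential moment.

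First I would write the exponential moment as an integral over the tail distribution of $E(t)$. For $C > 0$ (the case $C \leq 0$ being immediate since then $e^{CE(t)} \leq 1$ because $E(t) \geq 0$), we have
\begin{align*}
\E_D\left(e^{CE(t)}\right) = 1 + C\int_0^\infty e^{Cu}\,\P_D\big(E(t) > u\big)\diff u.
\end{align*}
Next I would use the defining relation $\{E(t) > u\} \subseteq \{D(u) \leq t\}$ together with a Markov-type bound on the left tail of $D$. The key step is to control $\P_D(D(u) \leq t)$: for any $\lambda > 0$, applying the Chernoff/Markov inequality to $e^{-\lambda D(u)}$ gives
\begin{align*}
\P_D\big(D(u) \leq t\big) = \P_D\left(e^{-\lambda D(u)} \geq e^{-\lambda t}\right) \leq e^{\lambda t}\,\E_D\left(e^{-\lambda D(u)}\right) = e^{\lambda t}\,e^{-u\phi(\lambda)},
\end{align*}
where the last equality is precisely the Laplace transform identity $\E_D e^{-\lambda D(u)} = e^{-u\phi(\lambda)}$ stated in the definition of the subordinator.

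Substituting this bound into the integral yields
\begin{align*}
\E_D\left(e^{CE(t)}\right) \leq 1 + C e^{\lambda t}\int_0^\infty e^{(C - \phi(\lambda))u}\diff u,
\end{align*}
and the integral converges as soon as we choose $\lambda$ large enough that $\phi(\lambda) > C$. Here the infinite L\'evy measure hypothesis enters decisively: since $\nu(0,\infty) = \infty$, one has $\phi(\lambda) \to \infty$ as $\lambda \to \infty$, so such a $\lambda$ always exists no matter how large $C$ is. This gives the finite bound $\E_D(e^{CE(t)}) \leq 1 + Ce^{\lambda t}/(\phi(\lambda) - C) < \infty$, completing the proof. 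The main obstacle, and the only place requiring care, is justifying the growth $\phi(\lambda)\to\infty$; I would verify this directly from the integral representation of $\phi$, noting that infinite L\'evy measure forces $\int_0^\infty(1 - e^{-\lambda x})\nu(dx)$ to be unbounded in $\lambda$ via monotone convergence, rather than merely finite as in the compound Poisson case that the hypothesis explicitly excludes.
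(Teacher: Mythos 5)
Your proof is correct, but note that the paper itself does not prove this lemma at all: it is stated as a known result, with the reader deferred to the cited references (Jum--Kobayashi 2016 and Magdziarz--Orzel--Weron 2011). Your argument is essentially the standard one appearing in those references: the inclusion $\{E(t)>u\}\subseteq\{D(u)\le t\}$ (immediate from the contrapositive of the definition of the inverse subordinator), the Chernoff bound $\P_D(D(u)\le t)\le e^{\lambda t}e^{-u\phi(\lambda)}$, and the observation that the infinite L\'evy measure forces $\phi(\lambda)\to\infty$ by monotone convergence, so that $\lambda$ can be chosen with $\phi(\lambda)>C$. The only cosmetic difference from the cited proofs is that you integrate the tail $1+C\int_0^\infty e^{Cu}\P_D(E(t)>u)\diff{u}$, whereas the literature typically sums over integer levels $\sum_k e^{C(k+1)}\P_D(E(t)\ge k)$; both are equally valid. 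All steps check out, including the two places requiring care: the case split at $C\le 0$, and the justification that $\phi$ is unbounded (which is exactly where the exclusion of compound Poisson subordinators is used). Your write-up thus supplies a self-contained proof where the paper relies on external sources.
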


We also need the continuity of the solution to \eqref{SDErttcSDE} presented in the next lemma. The proof is not hard to obtain by using the standard approach (see for example \cite{M2007}).
\begin{lemma}\label{lemma:holdery}
Suppose that Assumptions \ref{as:Polynomial growth} and \ref{as:Khasminskii} hold. Then for any $q < p/(\beta+1)$ and $|t - s| < 1$, the solution to \eqref{SDErttcSDE} satisfies
\begin{equation*}
\E_W |x(t) - x(s)|^q \leq C_4 |t - s|^{q/2} e^{C_4t},
\end{equation*}
where $C_4$ is a constant independent of $t$ and $s$.
\end{lemma}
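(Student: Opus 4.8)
The plan is to use the standard time-regularity moment estimate for SDE solutions, based on the integral form of \eqref{SDErttcSDE} together with the Burkholder--Davis--Gundy (BDG) inequality and the a priori moment bound of Lemma~\ref{lemma2}. Without loss of generality I take $s<t$ with $t-s<1$ and write the increment in integral form,
\[
x(t)-x(s)=\int_s^t \mu(u,x(u))\diff{u}+\int_s^t \sigma(u,x(u))\diff{W(u)}.
\]
Applying the elementary inequality $|a+b|^q\le 2^{q-1}(|a|^q+|b|^q)$ splits $\E_W|x(t)-x(s)|^q$ into a drift contribution and a diffusion contribution, which I would estimate separately.

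For the drift term I would use H\"older's inequality in the time variable,
\[
\left|\int_s^t \mu(u,x(u))\diff{u}\right|^q\le |t-s|^{q-1}\int_s^t |\mu(u,x(u))|^q\diff{u},
\]
and for the diffusion term the BDG inequality followed by H\"older's inequality produces a bound of the form $C|t-s|^{q/2-1}\int_s^t \E_W|\sigma(u,x(u))|^q\diff{u}$. In both cases I then invoke the growth bound \eqref{mu(u)}, which controls $|\mu(u,x(u))|^q$ and $|\sigma(u,x(u))|^q$ by a constant multiple of $1+|x(u)|^{(\beta+1)q}$.

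The key step is to bound $\E_W|x(u)|^{(\beta+1)q}$ uniformly in $u$, and this is precisely where the hypothesis $q<p/(\beta+1)$ enters: it guarantees $(\beta+1)q<p$, so that the uniform $p$-th moment bound of Lemma~\ref{lemma2} yields $\sup_{u}\E_W|x(u)|^{(\beta+1)q}\le C$ by Lyapunov's inequality. Substituting back, the drift term contributes a factor $|t-s|^q$ and the diffusion term a factor $|t-s|^{q/2}$; since $t-s<1$ and $q\ge q/2$, we have $|t-s|^q\le |t-s|^{q/2}$, so both are dominated by $|t-s|^{q/2}$. Collecting the constants, and keeping track of the time dependence accumulated through the estimate, produces the factor $e^{C_4 t}$ and hence the claimed inequality.

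The argument is essentially routine; the only point requiring genuine care is the super-linear growth, which forces the moment exponent $(\beta+1)q$ in \eqref{mu(u)} to stay strictly below $p$. Thus the main (and rather mild) obstacle is simply to verify that the condition $q<p/(\beta+1)$ is exactly what keeps the relevant moment finite and uniformly bounded on $[t_0,T]$, after which the BDG and H\"older estimates close the proof.
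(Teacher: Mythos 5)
Your proposal is correct and coincides with the standard argument that the paper itself points to (the paper offers no proof of this lemma, merely citing \cite{M2007}): write the increment in integral form, estimate the drift term by H\"older and the diffusion term by BDG plus H\"older, invoke the polynomial growth bound \eqref{mu(u)}, and close with the moment bound of Lemma \ref{lemma2}, the hypothesis $q<p/(\beta+1)$ being exactly what keeps the exponent $(\beta+1)q$ below $p$. The one point you should make explicit is the factor $e^{C_4 t}$: instead of the finite-horizon uniform bound $\sup_{u}\E_W|x(u)|^{(\beta+1)q}\leq C$ (whose constant depends on the horizon $T$), use the time-explicit Gronwall estimate $\E_W|x(u)|^{p}\leq Ce^{Cu}$ that Assumption \ref{as:Khasminskii} provides, because the lemma is later applied with $t$ replaced by the unbounded random time $E(t)$ (see \eqref{yyerror}), where a horizon-dependent constant would not suffice and only the explicit exponential form, integrable against $\P_D$ by Lemma \ref{expmonfinite}, closes the argument.
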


\subsection{Strong convergence of the truncated EM method for time-changed SDEs}\label{subsec:tcSDEstroconv}
Before the main result is presented, we make some remarks on the constant, $C$, in Theorem \ref{main theorem 2}. Since the main purpose of Theorem \ref{main theorem 2} is to show the convergence rate, we do not give the explicit form of the constant $C$. But it is not hard by going through the proof to see that $C(t):=C$ contains the time variable $t$ only in the form of $\exp (\text{some constant}\times t )$. This means that if we replace $t$ by $E(t)$, we have $\E_D(C(E(t))) < \infty$ by Lemma \ref{expmonfinite}.

\begin{theorem}\label{thm:timechangenum}
Let Assumptions \ref{as:Polynomial growth}, \ref{as:Khasminskii 2} and \ref{as:superlinear growth} hold. In addition, assume that (\ref{k2}) in Assumption \ref{as:Khasminskii} is true for any $p > 2$. Then the combination of the truncated Euler-Maruyama solution and the discretized inverse subordinator, i.e. $x_\Delta(E_\Delta(t))$, converges strongly to the solution of \eqref{tcSDE}
\begin{equation*}
\E \left| y(t) - x_\Delta(E_\Delta(t))\right|^{\overline{q}} \leq C_{tc} \Delta^{\min(\gamma,\alpha,\frac12-\varepsilon){\overline{q}}},
\end{equation*}
for any $\overline{q} \geq 2$, $\Delta \in (0,1]$, $\varepsilon \in (0,1/4)$ and $t \in [0,T]$, where $C_{tc}$ is constant independent from $\Delta$.
\end{theorem}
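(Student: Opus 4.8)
The plan is to reduce the time-changed error to the classical error already controlled by Theorem~\ref{main theorem 2}, exploiting the duality principle and the error estimate for the discretized inverse subordinator. By Theorem~\ref{lemma:equivxandy}, the solution of \eqref{tcSDE} is $y(t)=x(E(t))$, where $x$ solves the classical SDE \eqref{SDErttcSDE}. Hence I would write, for any fixed $t\in[0,T]$,
\begin{align*}
\E\left|y(t)-x_\Delta(E_\Delta(t))\right|^{\overline{q}}
&\leq 2^{\overline{q}-1}\E\left|x(E(t))-x(E_\Delta(t))\right|^{\overline{q}}\\
&\quad+2^{\overline{q}-1}\E\left|x(E_\Delta(t))-x_\Delta(E_\Delta(t))\right|^{\overline{q}}
=:J_1+J_2,
\end{align*}
thereby splitting the problem into a \emph{time-discretisation} term $J_1$ (comparing the true solution evaluated at the true time change versus the discretized time change) and a \emph{scheme} term $J_2$ (the classical truncated EM error, evaluated at the random time $E_\Delta(t)$).

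For $J_1$ I would first condition on $\F^D$ and use independence of $W$ and $D$, so that the inner $\E_W$-expectation may treat $E(t)$ and $E_\Delta(t)$ as deterministic endpoints. Lemma~\ref{lemma:holdery} then gives $\E_W|x(E(t))-x(E_\Delta(t))|^{\overline{q}}\leq C_4\,|E(t)-E_\Delta(t)|^{\overline{q}/2}\,e^{C_4 E(t)}$ (after checking the admissibility condition $\overline{q}<p/(\beta+1)$, which is guaranteed because \eqref{k2} is assumed for every $p>2$). Applying Lemma~\ref{Eterror}, $|E(t)-E_\Delta(t)|\leq\Delta$, so the H\"older factor is bounded by $\Delta^{\overline{q}/2}$; taking $\E_D$ of the remaining $e^{C_4 E(t)}$ is finite by Lemma~\ref{expmonfinite}. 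This yields $J_1\leq C\,\Delta^{\overline{q}/2}$, which is dominated by the claimed rate since $\min(\gamma,\alpha,\tfrac12-\varepsilon)\leq\tfrac12$.

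For $J_2$ I would again condition on $\F^D$, viewing $s=E_\Delta(t)$ as a deterministic time in $[0,T]$, and invoke Theorem~\ref{main theorem 2}: for each such $s$, $\E_W|x(s)-x_\Delta(s)|^{\overline{q}}\leq C(s)\,\Delta^{\min(\gamma,\alpha,\frac12-\varepsilon)\overline{q}}$. The crucial point, flagged in the remark preceding the theorem, is that the constant has the form $C(s)=\exp(\text{const}\times s)$; substituting $s=E_\Delta(t)\leq E(t)$ and taking $\E_D$ leaves $\E_D\,C(E(t))<\infty$ again by Lemma~\ref{expmonfinite}. This produces $J_2\leq C\,\Delta^{\min(\gamma,\alpha,\frac12-\varepsilon)\overline{q}}$, matching the asserted rate. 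Combining the two bounds and absorbing constants into $C_{tc}$ completes the proof.

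The main obstacle is the careful handling of the random evaluation time $E_\Delta(t)$ in $J_2$: one must justify that Theorem~\ref{main theorem 2} may be applied pathwise in $E_\Delta(t)$ and that the resulting constant remains integrable against $\P_D$. This hinges entirely on the exponential-in-time structure of $C(t)$ together with the finiteness of the exponential moment of $E(t)$ (Lemma~\ref{expmonfinite}); the independence of $W$ and $D$ is what legitimises freezing the time change inside $\E_W$. The remaining estimates are routine once these measurability and integrability issues are dispatched.
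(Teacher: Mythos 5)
Your proposal is correct and follows essentially the same route as the paper's proof: the same duality-based identification $y(t)=x(E(t))$, the same two-term splitting, and the same use of Lemmas \ref{Eterror}, \ref{expmonfinite}, \ref{lemma:holdery} and Theorem \ref{main theorem 2} together with the exponential-in-time form of the constant. If anything, you are slightly more explicit than the paper in spelling out the conditioning on $\F^D$, the admissibility check $\overline{q}<p/(\beta+1)$, and why $\Delta^{\overline{q}/2}$ is dominated by the claimed rate, all of which the paper leaves implicit.
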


\begin{proof}
By Theorem \ref{lemma:equivxandy} and the elementary inequality, we have
\begin{align*}
&~~~~\E \left| y(t) - x_\Delta (E_\Delta(t))\right|^{\overline{q}} \nonumber \\
&= \E \left| x(E(t)) - x_\Delta (E_\Delta(t))\right|^{\overline{q}} \nonumber \\
&\leq  2^{{\overline{q}} - 1} \left( \E \left| x(E(t)) - x(E_\Delta(t))\right|^{\overline{q}} +   \E \left| x(E_\Delta(t) - x_\Delta (E_\Delta(t))\right|^{\overline{q}} \right).
\end{align*}
By Lemmas \ref{Eterror}, \ref{expmonfinite} and \ref{lemma:holdery},  we can see
\begin{equation} \label{yyerror}
\E \left| x(E(t)) - x(E_\Delta(t))\right|^{\overline{q}} \leq C_4 \Delta^{\overline{q}/2} \E_D \left( e^{C_4 E(t)} \right) \leq C_5 \Delta^{\overline{q}/2},
\end{equation}
where $C_5$ is a constant independent from $\Delta$.
By Lemma \ref{Eterror} and Theorem \ref{main theorem 2}, we obtain
\begin{equation}\label{yYerror}
\E \left| x(E_\Delta(t) - x_\Delta(E_\Delta(t))\right|^{\overline{q}} \leq \E_D(C) \Delta^{\min{(\gamma,\alpha,\frac12-\varepsilon)}{\overline{q}}}\leq  C_6 \Delta^{\min{(\gamma,\alpha,\frac12-\varepsilon)}{\overline{q}}},
\end{equation}
where $C_6$ is a constant independent from $\Delta$. Combining \eqref{yyerror} and \eqref{yYerror}, we have the required assertion.
\end{proof}

\section{Numerical Simulations}\label{numsimu}
This section is divided into two parts. The numerical simulations on SDEs are presented in Section \ref{numsimuSDE} and time-changed SDEs are displayed in Section \ref{numsimutcSDE}.
\subsection{Simulations for SDEs}\label{numsimuSDE}
Two examples with the different theoretical convergence rates are presented in this part. Computer simulations are conducted to verify the theoretical results.
\begin{example}
\label{example1}
Consider a scalar stochastic differential equation
\begin{align}
  \label{example:SDE}
  \begin{split}
  \begin{cases}
    \diff{x(t)} =\bigg([t(1-t)]^{\frac14}x^2(t)-2x^5(t)\bigg) \diff{t} + \bigg([t(1-t)]^{\frac14}x^2(t)\bigg)\diff W(t), \\
    x(t_0) =2,
  \end{cases}
  \end{split}
\end{align}
with $t_0=0$ and $T=1$.
\par
For any $q>2$, we can see
\begin{eqnarray*}
&&(x-y)^T(\mu(t,x)-\mu(t,y))+\frac{q-1}{2}|\sigma^r(t,x)-\sigma^r(t,y)|^2\\
&\leq&(x-y)^2\bigg([t(1-t)]^{\frac14}(x+y)-2(x^4+x^3y+x^2y^2+xy^3+y^4)\\
&&+\frac{q-1}{2}[t(1-t)]^{\frac{1}{2}}(x+y)^2\bigg).
\end{eqnarray*}
But
\begin{equation*}
-2(x^3y + xy^3) = -2xy(x^2 + y^2) \leq (x^2+ y^2)^2 = x^4 + y^4 +2x^2y^2.
\end{equation*}
Therefore,
\begin{eqnarray*}
&&(x-y)^T(\mu(t,x)-\mu(t,y))+\frac{q-1}{2}|\sigma^r(t,x)-\sigma^r(t,y)|^2\\
&\leq& (x-y)^2\bigg([t(1-t)]^{\frac14}(x+y)-x^4-y^4+(q-1)[t(1-t)]^{\frac{1}{2}}(x^2+y^2)\bigg)\\
&\leq& L_1 (x-y)^2,
\end{eqnarray*}
where the last inequality is due to the fact that polynomials with the negative coefficient for the highest order term can always be bounded from above.
This indicates that Assumption \ref{as:Khasminskii 2} holds.\par
In addition, for any $p>2$, we have
\begin{eqnarray*}
x^T\mu(t,x)+\frac{p-1}{2}|\sigma(t,x)|^2\leq x^3-2x^6+\frac{p-1}{2}|x|^4\leq K_1(1+|x|^2),
\end{eqnarray*}
which means that Assumption \ref{as:Khasminskii} is satisfied.
\par \noindent
Using the mean value theorem for the temporal variable, Assumptions \ref{as:Polynomial growth} and \ref{as:superlinear growth} are satisfied with $\alpha=\gamma=1/4$ and $\beta=4$. According to Theorem~\ref{main theorem 1}, we know that
\par \noindent
\begin{align*}
\E_W|x(t)-x_\Delta(t)|^{\overline{q}} \leq C\bigg((f^{-1}(\kappa(\Delta)))^{(5\overline{q}-p)/p}+\Delta^{\overline{q}/2}(\kappa(\Delta))^{\overline{q}}+\Delta^{ \overline{q}/4}\bigg),
\end{align*}
\par \noindent
and
\begin{align*}
\E_W|x(t)-\overline{x}_\Delta(t)|^{\overline{q}} \leq C\bigg((f^{-1}(\kappa(\Delta)))^{(5\overline{q}-p)/p}+\Delta^{\overline{q}/2}(\kappa(\Delta))^{\overline{q}}+\Delta^{ \overline{q}/4}\bigg).
\end{align*}
\par \noindent
Due to that
\par \noindent
\begin{align*}
\sup_{t_0\leq t\leq T}\sup_{|x|\leq u}(|\mu(t,x)\vee|\sigma(t,x)|)\leq 3u^5,\quad\forall u\geq1,
\end{align*}
\par \noindent
we choose $f(u)=3u^5$ and $\kappa(\Delta)=\Delta^{-\varepsilon}$, for any $\varepsilon\in(0,1/4)$. As a result, $f^{-1}(u)=(u/3)^{1/5}$ and $f^{-1}(\kappa(\Delta))=(\Delta^{-\varepsilon}/3)^{1/5}$. Choosing $p$ sufficiently large, we can get from Theorem ~\ref{main theorem 2} that
\begin{align*}
\sup_{0\leq t\leq 1}\E_W|x(t)-x_\Delta(t)|^{\overline{q}} \leq  C\Delta^{\overline{q}/4},
\end{align*}
and
\begin{align*}
\sup_{0\leq t\leq 1}\E_W|x(t)-\overline{x}_\Delta(t)|^{\overline{q}} \leq C\Delta^{\overline{q}/4},
\end{align*}
which imply that the convergence rate of truncated EM method for the SDE (\ref{example:SDE}) is $1/4$.\par
Let us compute the approximation of the mean square error. We run $M=1000$ independent trajectories for every different step sizes, $10^{-1}$, $10^{-2}$, $10^{-3}$, $10^{-8}$. Because it is hard to find the true solution for the SDE, the numerical solution with the step size $10^{-8}$ is regarded as the exact solution.

\begin{figure}[htbp]
\centering
\subfigure[Convergence rate of  Example \ref{example1}]
{
  \begin{minipage}{5cm}
  \label{a}
  \centering
  \includegraphics[scale=0.38]{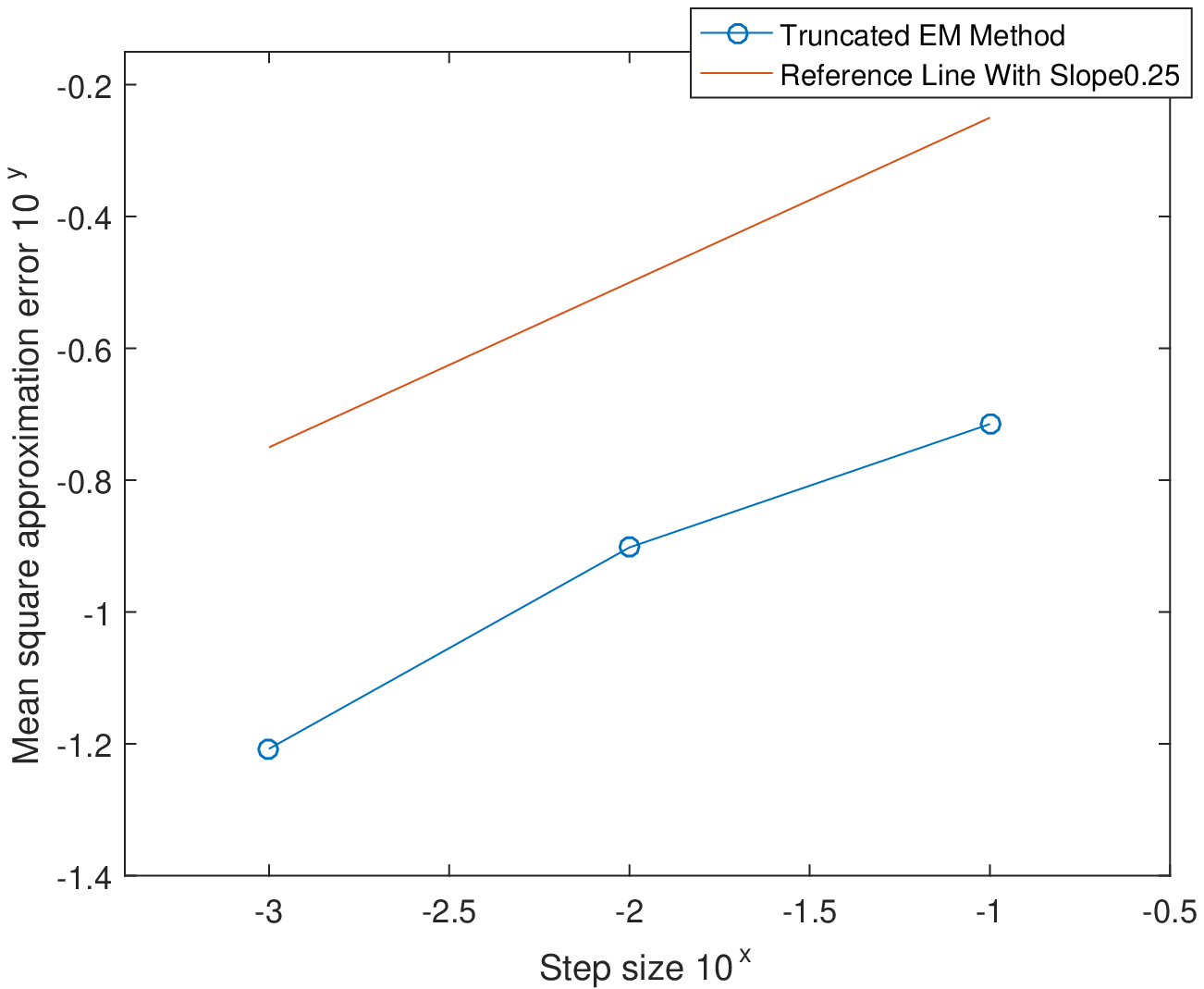}
  \end{minipage}
}
\subfigure[Convergence rate of Example \ref{example2}]
{
  \begin{minipage}{5cm}
  \label{b}
  \centering
  \includegraphics[scale=0.38]{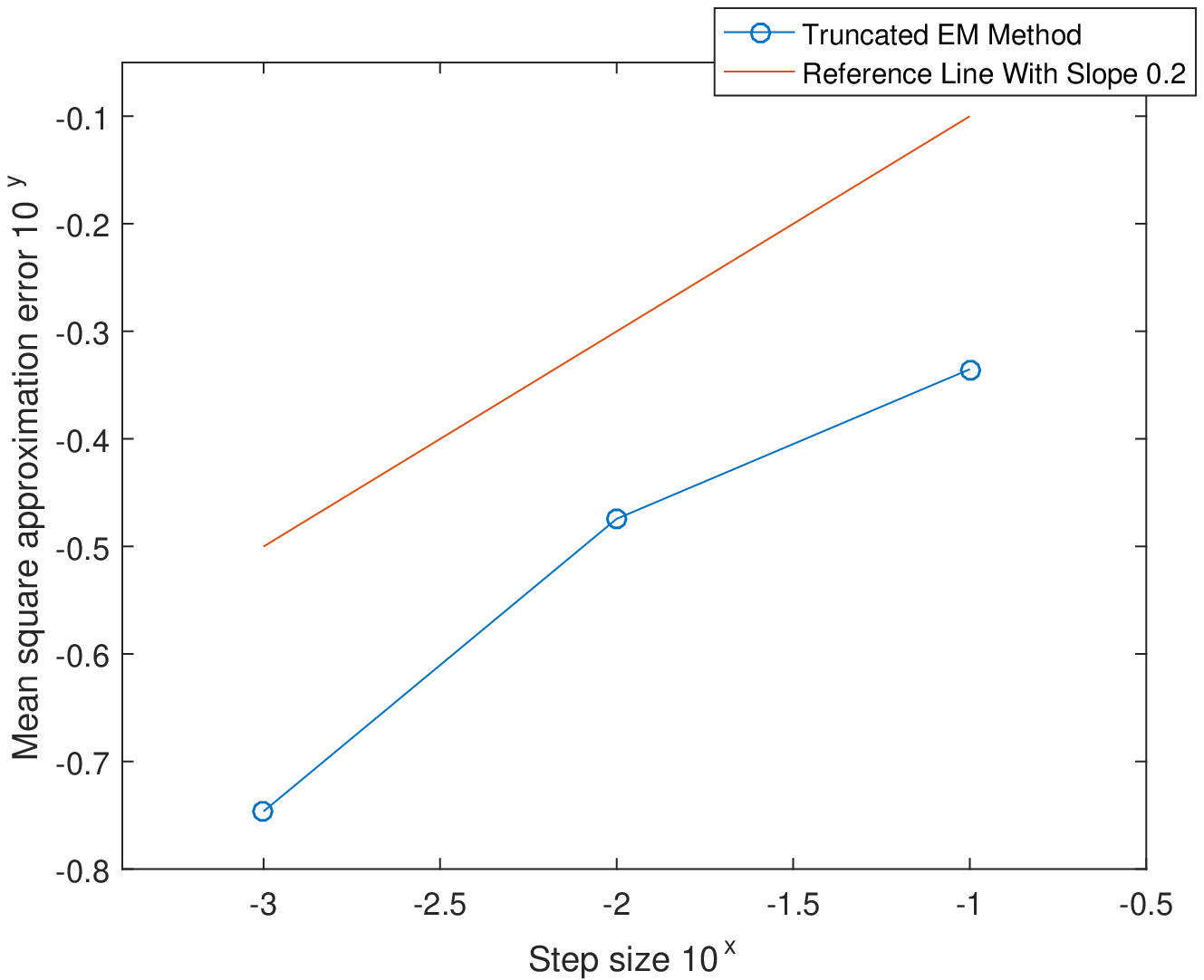}
  \end{minipage}
}
\caption{The $L^1$ errors between the exact solution and the numerical solutions for step sizes $\Delta=10^{-1},~10^{-2},~10^{-3}$.}
\end{figure}

By the linear regression, also shown in the Figure \ref{a}, the slope of the errors against the step sizes is approximately 0.24629, which is quite close to the theoretical result.
\end{example}

\begin{example}
\label{example2}
Consider the scalar stochastic differential equation
\begin{align}
  \label{example2:SDE}
  \begin{split}
  \begin{cases}
    \diff{x(t)} =\bigg([(t-1)(2-t)]^{\frac15}x^2(t)-2x^5(t)\bigg) \diff{t} + \bigg([(t-1)(2-t)]^{\frac25}x^2(t)\bigg)\diff W(t),\\
    x(t_0) =2,
  \end{cases}
  \end{split}
\end{align}
where $t_0=1$ and $T=2$. In the similar way as Example \ref{example1}, we can verify that Assumptions \ref{as:Khasminskii 2} and \ref{as:Khasminskii} hold.
\par
Moreover, the mean value theorem is used to verify that Assumptions \ref{as:Polynomial growth} and \ref{as:superlinear growth} are satisfied with $\alpha=2/5, \gamma=1/5$ and $\beta=4$.\par
We can get from Theorem \ref{main theorem 2} that
\begin{align*}
\sup_{1\leq t\leq 2}\E_W|x(t)-x_\Delta(t)|^{\overline{q}} \leq C\Delta^{\overline{q}/5},
\end{align*}
and
\begin{align*}
\sup_{1\leq t\leq 2}\E_W|x(t)-\overline{x}_\Delta(t)|^{\overline{q}} \leq C\Delta^{\overline{q}/5},
\end{align*}
which implies that the convergence rate of truncated EM method for the SDE (\ref{example2:SDE}) is $1/5$.
Simulation is conducted using the same strategy as that in Example \ref{example1}.
Using the linear regression, also seen in the figure \ref{b}, the slope of the errors against the step sizes is approximately 0.20550, which coincides with the theoretical result.
\end{example}
%%%%%%%%%%%%%%%%%%%%%%%%%%%%%%%%%%%%%

\subsection{Simulations for time-changed SDEs}\label{numsimutcSDE}

\begin{example}
A two-dimensional time-changed SDE
\begin{align}
  \label{expl:strcon}
  \begin{split}
  \begin{cases}
    \diff{y_1(t)} = -2y_1^4(t) \diff{t} + y_2^2(t) \diff W(t),\\
    \diff{y_2(t)} = -2y_2^4(t) \diff{t} + y_1^2(t) \diff W(t),
  \end{cases}
  \end{split}
\end{align}
is considered with the initial data $y_1(0) = 1$ and $y_2(0) = 2$.
\end{example}
\par
For a given step size $h$, one path of the numerical solution to \eqref{expl:strcon} is simulated in the following way.
\par
\noindent
{\bf Step 1.} The truncated EM method with the step size $\Delta$ is used to simulate the numerical solution, $x_\Delta (t_k)$, for $k=1,2,3,...$, to the duel SDE
\begin{align*}
  \begin{split}
  \begin{cases}
    \diff{x_1(t)} = -2x_1^4(t) \diff{t} + x_2^2(t) \diff W(t),\\
    \diff{x_2(t)} = -2x_2^4(t) \diff{t} + x_1^2(t) \diff W(t).
  \end{cases}
  \end{split}
\end{align*}
\par
\noindent
{\bf Step 2.} One path of the subordinator $D(t)$ is simulated with the same step size $\Delta$. (see for example \cite{JUMthesis2015}).
\par
\noindent
{\bf Step 3.} The $E_h(t)$ is found by using \eqref{findEht}.
\par
\noindent
{\bf Step 4.} The combination, $x_\Delta(E_h(t))$, is used to approximate the solution to \eqref{expl:strcon}.

\begin{figure}[htbp]
\centering
\subfigure[One path of $D(t)$]
{
  \begin{minipage}{5.7cm}
  \label{fig:pathDt}
  \centering
  \includegraphics[scale=0.44]{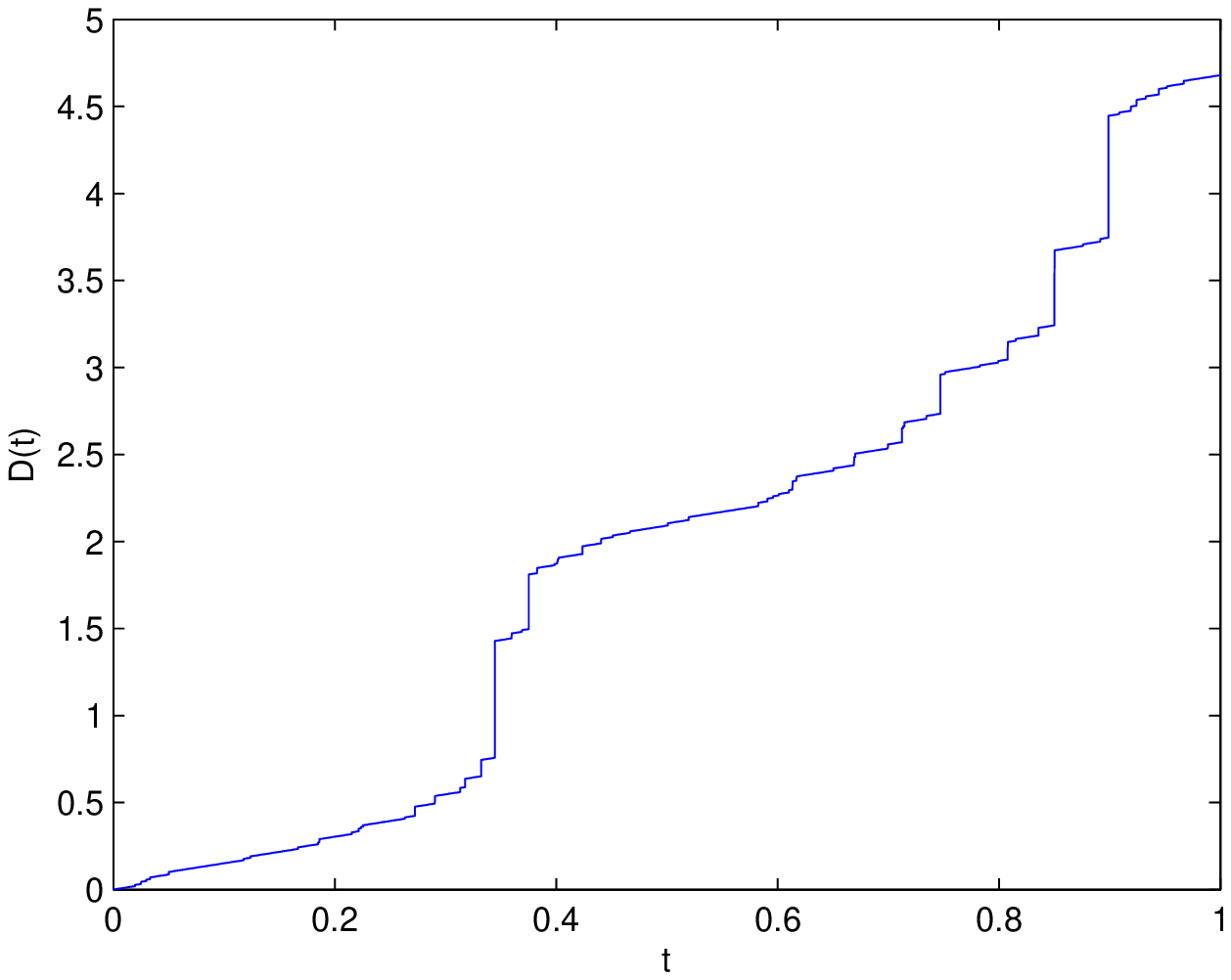}
  \end{minipage}
}
\subfigure[One path of $E(t)$]
{
  \begin{minipage}{5.7cm}
  \label{fig:pathEt}
  \centering
  \includegraphics[scale=0.44]{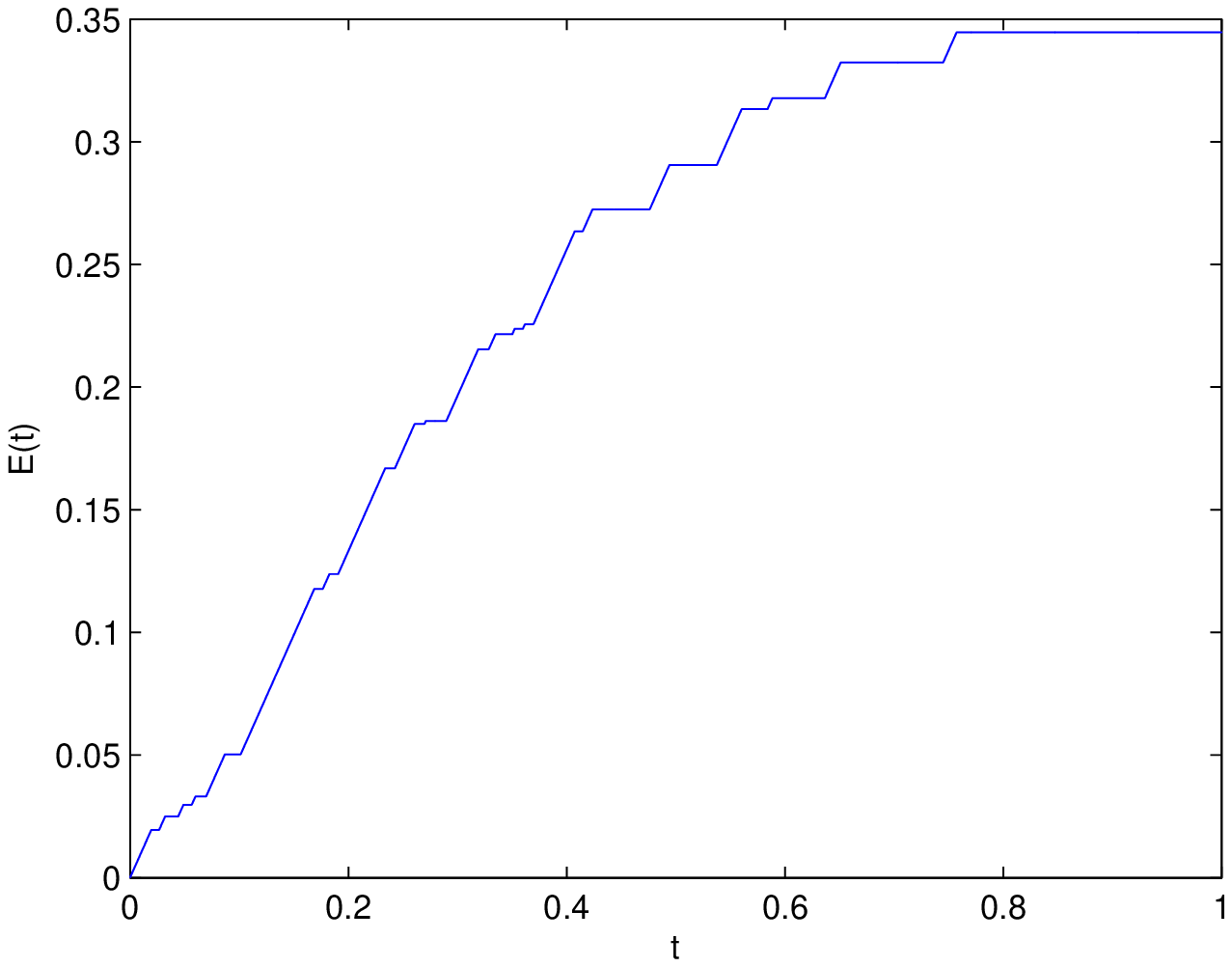}
  \end{minipage}
}
\subfigure[One path of $y_1(t)$]
{
  \begin{minipage}{5.7cm}
  \label{fig:pathy1}
  \centering
  \includegraphics[scale=0.44]{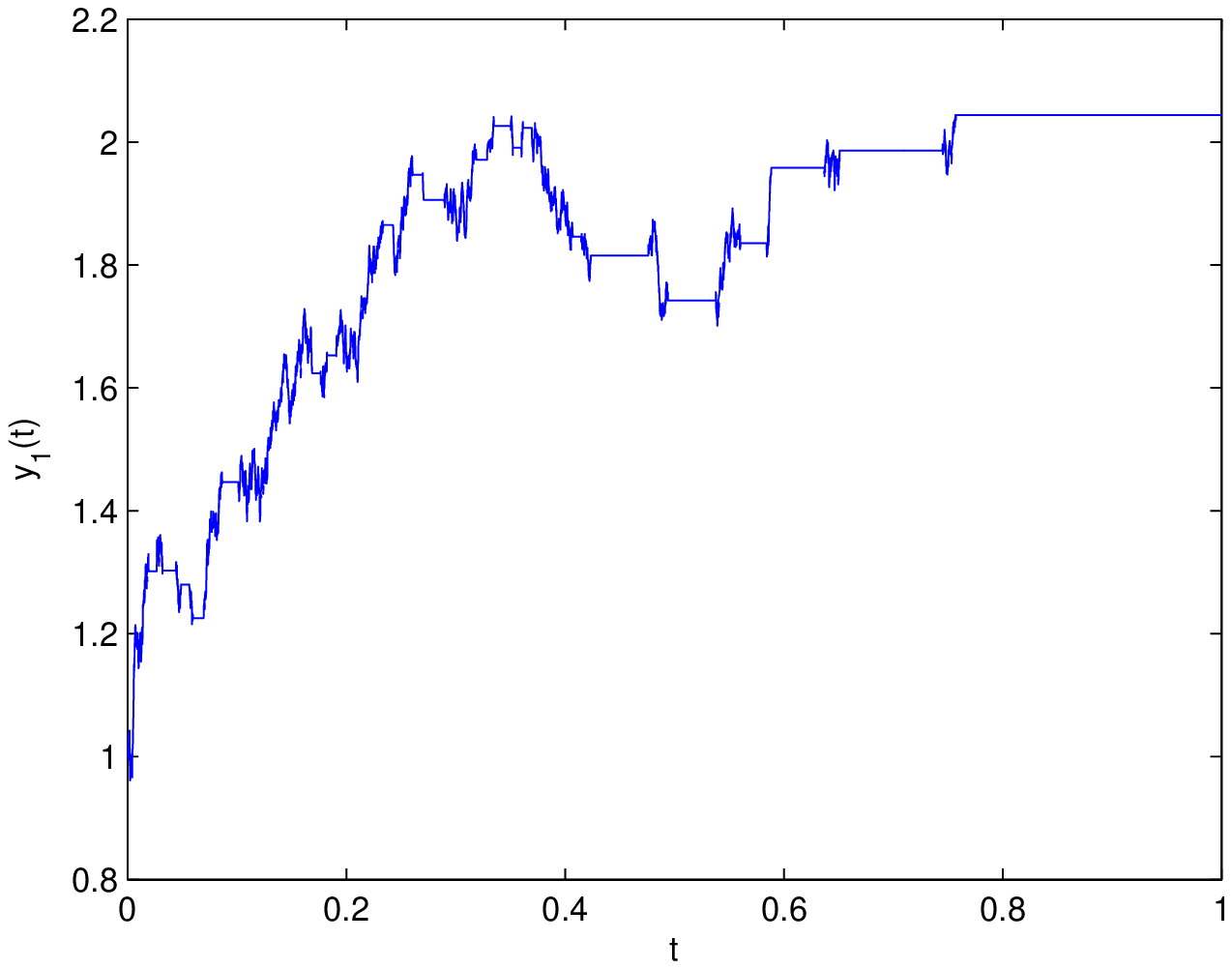}
  \end{minipage}
}
\subfigure[One path of $y_2(t)$]
{
  \begin{minipage}{5.7cm}
  \label{fig:pathy2}
  \centering
  \includegraphics[scale=0.44]{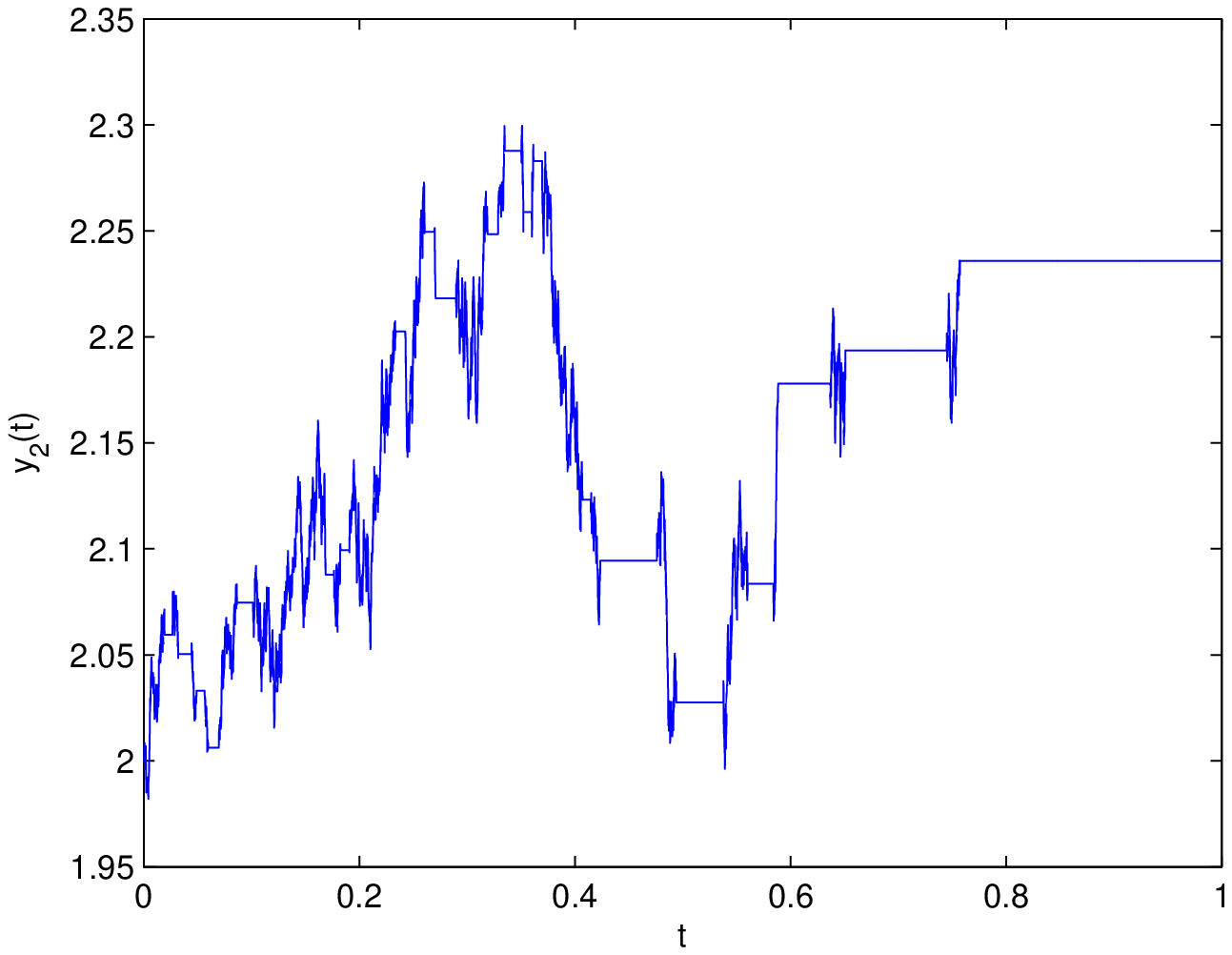}
  \end{minipage}
}
\caption{Numerical simulations of $D(t)$, $E(t)$, $y_1(t)$ and $y_2(t)$}
\end{figure}
For $t\in[0,1]$ and $\Delta=10^{-4}$, Figure \ref{fig:pathDt} shows one path of $D(t)$ and Figure \ref{fig:pathEt} displays one path of $E(t)$. Paths of $y_1(t)$ and $y_2(t)$ are plotted in Figures \ref{fig:pathy1} and \ref{fig:pathy2}, respectively.
\par
Now we demonstrate the strong convergence rate. Since the explicit solution ia hard to obtain, we treat the numerical solution with $\Delta = 10^{-8}$ as the true solution. One hundred samples are used to compute the strong convergence with the step sizes $10^{-2}$, $10^{-3}$ and $10^{-4}$. We pick up $\epsilon=0.01$, by Theorem \ref{thm:timechangenum} a strong convergence rate that is closed to 0.5 is expected. Figure \ref{fig:strcov} illustrate such a  convergence rate.
\begin{figure}
\begin{center}
  \includegraphics[scale=0.44]{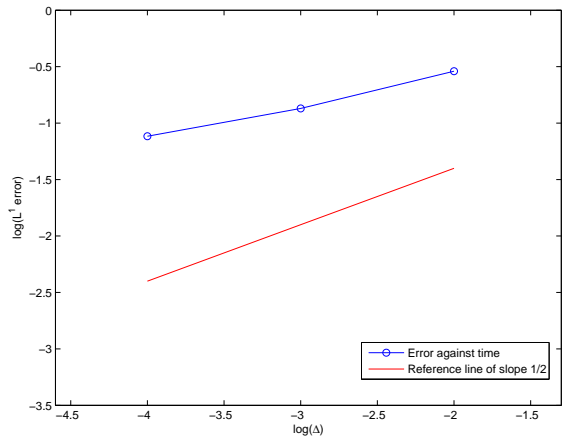}
\end{center}
\caption{Blue line: Loglog plot of the strong $L^1$ error against the step size. Red Line: The reference line with the slope of 1/2.}
\label{fig:strcov}
\end{figure}
\section{Conclusion} \label{sec:conclu}
In this paper, we apply the truncated EM method for a class of non-autonomous classical SDEs with the H\"older continuity in the temporal variable and the super-linear growth in the state variable. The strong convergence with the rate is proved.
\par
In addition, the results on the classical SDEs are used to prove that the truncated EM method can also work well for a class of highly non-linear time-changed SDEs. Such a result provides a trusted numerical method for a much larger class of time-changed SDEs than those in existing works.

\section*{Acknowledgement}
Wei Liu is financially supported by the National Natural Science Foundation of China (11701378, 11871343), “Chenguang Program” supported by both Shanghai Education Development Foundation and Shanghai Municipal Education Commission (16CG50), and Shanghai Gaofeng \& Gaoyuan Project for University Academic Program Development.
\par
Xuerong Mao would like to thank the EPSRC (EP/K503174/1), the Royal Society (Wolfson Research Merit Award WM160014), the Royal Society and the
Newton Fund (NA160317, Royal Society-Newton Advanced Fellowship) and the Ministry of Education (MOE) of China (MS2014DHDX020), for their financial support.
\par
Yue Wu is financially supported by EPRSC EP/R041431/1.

\section*{References}

\end{document}